\newcommand{\T}{\intercal}	
\newcommand{\tf}{ {t_\mathrm{f}} }	
\newcommand{\ts}{ {t_\mathrm{s}} }	
\newcommand{\td}{\mathrm{d}}	
\newcommand{\s}{\mathrm{s}}
\renewcommand{\vec}[1]{\MakeTextLowercase{\bm{#1}}}
\newcommand{\mat}[1]{\MakeTextUppercase{\bm{#1}}}
\newcommand{\op}[1]{\MakeTextUppercase{\bm{#1}}}
\newcommand{\set}[1]{\mathcal{\MakeTextUppercase{#1}}}
\newcommand{\real}{\mathbb{R}}
\newcommand{\al}[2]{\alpha_{#1}^{(#2)}}
\newcommand{\mL}{\mathcal{L}}
\newcommand{\mP}{\mathcal{P}}
\newcommand{\mO}{\mathcal{O}}
\newcommand{\mN}{\mathcal{N}}
\newcommand{\mI}{\mathcal{I}}
\DeclareMathOperator*{\NBV}{NBV}
\DeclareMathOperator{\e}{e} 
\newcommand{\HLag}{\mL}
\newcommand{\Lag}{\mL}
\newcommand{\eLag}{\hat{\mL}}
\newcommand{\bse}{\begin {subequations}}
\newcommand{\ese}{\end {subequations}}
\newcommand{\nrm}[2]{\left|\left| #1 \right|\right|_{#2}}
\newcommand{\norm}[1]{\left|\left| #1 \right|\right|}
\newcommand{\abs}[1]{\left| #1 \right|}
\begin{document}

\title{Approximation of weak adjoints by reverse automatic differentiation of
BDF methods}

\titlerunning{Approximation of weak adjoints by reverse AD of BDF methods} 

\author{D\"orte Beigel \and Mario S. Mommer \and Leonard Wirsching \and Hans
Georg Bock}

\authorrunning{D. Beigel, M.S. Mommer, L. Wirsching, H.G. Bock} 

\institute{
Interdisciplinary Center for Scientific Computing (IWR), Heidelberg
University.
Im Neuenheimer Feld 368, 69120 Heidelberg, Germany.
Fax: +49-6221-54 5444.\\
\email{doerte.beigel@iwr.uni-heidelberg.de}, Tel.: +49-6221-54 8896
(corresponding).\\
\email{\{mario.mommer, leonard.wirsching, bock\}@iwr.uni-heidelberg.de}.
}

\date{September 13, 2011}

\maketitle

\begin{abstract}

With this contribution, we shed light on the relation between the
discrete adjoints of multistep backward differentiation formula (BDF) methods
and the solution of the adjoint differential equation. To this end, we develop a
functional-analytic framework based on a constrained variational problem and
introduce the notion of weak adjoint solutions. We devise a finite element
Petrov-Galerkin interpretation of the BDF method together with its discrete
adjoint scheme obtained by reverse internal numerical differentiation.
We show how the finite element approximation of the weak adjoint is computed by
the discrete adjoint scheme and prove its asymptotic convergence in
the space of normalized functions of bounded variation. We also obtain
asymptotic convergence of the discrete adjoints to the classical adjoints
on the inner time interval. Finally, we give
numerical results for non-adaptive and fully adaptive BDF schemes. The presented
framework opens the way to carry over the existing theory on global error
estimation techniques from finite element methods to BDF methods.

\keywords{BDF methods \and discrete adjoints \and Petrov-Galerkin
discretization}
\subclass{65L06 \and 65L60 \and 49K40 \and 65L20}
\end{abstract}

\section{Introduction}
\label{sec:intro}

Consider a nonlinear initial value problem (IVP) in ordinary differential
equations (ODE) with sufficiently smooth right hand side $\vec{f}: [ \ts,\tf ]
\times \real^d \rightarrow \real^d $
\bse\label{eq:IVP}
\begin{align}
\dot{\vec{y}}(t) &= \vec{f}(t, \vec{y}(t)),\quad t\in
(\ts,\tf] \label{eq:IVP_ode}\\
\vec{y}(\ts) &= \vec{y}_\text{s}. \label{eq:IVP_ic}
\end{align}
\ese
Consider also a differentiable criterion of interest $J$ depending on
the final state $\vec{y}(\tf)$ of the solution of \eqref{eq:IVP}. This
is relevant whenever one is not interested in the whole
solution trajectory $\vec{y}(t)$ or even the final state $\vec{y}(\tf)$, but
only in a functional output of these quantities. Note that by
standard
reformulations (cf. \cite[p.93]{Hartman2002}, \cite[p.25]{Berkovitz1974}) this
setting also captures the cases of a parameter-dependent right hand side
$\vec{f}(t,\vec{y},\vec{p})$ and a criterion of interest of Bolza type
$J(\vec{y})= \int_\ts^\tf J_1(\vec{y}(t),\vec{p})\td t + J_2(\vec{y}(\tf))$. 

The adjoint differential equation corresponding to the evaluation of
$J(\vec{y}(\tf))$ in the solution of \eqref{eq:IVP} is (see Section
\ref{sec:Hadj})
\bse \label{eq:aIVP}
\begin{align}
\dot{\vec{\lambda}}(t) &= - \vec{f}^\T_{\vec{y}}(t,\vec{y}(t))\vec{\lambda}(t),
\quad t\in (\tf,\ts] \label{eq:aIVP_ode} \\
\vec{\lambda}(\tf) &= J^\prime(\vec{y}(\tf))^\T.\label{eq:aIVP_ic}
\end{align}
\ese
The adjoint solution describes the dependency of $J(\vec{y}(\tf))$ on
disturbances of the nominal solution $\vec{y}(t)$. Therefore, it is of great
importance in the solution of optimal control problems. For example, in indirect
approaches based on the Pontryagin minimum principle, \eqref{eq:aIVP} appears as
part of the optimality conditions. 

For an approximation of the solution of \eqref{eq:IVP}, the
solution of the adjoint differential equation \eqref{eq:aIVP} can be
computed in two different ways, the continuous adjoint approach or the discrete
adjoint approach.  The former solves the adjoint differential equation by
numerical integration, see for example \cite{Cao2002}. Whereas the latter
applies automatic differentiation techniques to the numerical integration
scheme. This approach, firstly presented in \cite{Bock1981}, is known as
internal numerical differentiation (IND). It has significant advantages
in direct derivative-based approaches for the solution of optimal control
problems that use integrators, e.g. direct single and multiple shooting.

In the case of Runge-Kutta methods, the discrete adjoint scheme generated by
adjoint IND is itself a Runge-Kutta scheme for the adjoint differential
equation \eqref{eq:aIVP}, and thus gives a convergent approximation to the
adjoint solution \cite{Bock1981,Walther2007}. In the case of continuous and
discontinuous
Galerkin methods applied to \eqref{eq:IVP}, the discrete adjoint scheme yields
an approximation to the solution of \eqref{eq:aIVP} (see e.g.
\cite{Johnson1988}). The discrete adjoints of discontinuous Galerkin methods for
compressible Navier-Stokes equations are, for example, considered in
\cite{Hartmann2007}.

The situation becomes significantly more complex in the case of multistep
methods, as the discrete adjoint schemes of linear multistep methods
(LMM) are generally \textit{not} consistent with the adjoint differential
equation \eqref{eq:aIVP}. But they still provide approximations of the
sensitivities $J^\prime(\vec{y}(\tf)) \frac{\partial \vec{y}(\tf)}{\partial
\vec{y}_\s}$ at the initial time $\ts$ that converge with the rate of the
nominal LMM \cite{Bock1987,Sandu2008}. Due to this property, the multistep BDF
method and its discrete adjoint scheme are used successfully in direct methods
for the solution of optimal control problems, e.g. in direct multiple
shooting \cite{Bock1984,Albersmeyer2010d}.

In this contribution, we focus on the relation between the discrete adjoints of
variable-order variable-stepsize BDF methods and the adjoints defined by
\eqref{eq:aIVP}. To this end, we construct a suitable constrained variational
problem (CVP) in a Banach space setting using the duality pairing between the
space of continuous functions and its dual, the space of normalized
functions of bounded variation. It turns out that the adjoint of a stationary
point of this CVP is the normalized integral of the solution of the Hilbert
space adjoint differential equation \eqref{eq:aIVP}. Motivated by PDE
nomenclature, we will call it a weak solution of \eqref{eq:aIVP} or shortly
weak adjoint. We apply Petrov-Galerkin
techniques, and show that with the appropriate choice of basis functions the
infinite-dimensional optimality conditions of the CVP are approximated by the
BDF method and its discrete adjoint scheme obtained by
adjoint internal numerical differentiation of the nominal BDF scheme. In
particular, we obtain that discretization and optimization commute in this
Banach space setting. Finally, we prove that the finite element approximation of
the weak adjoint, which can be computed by a simple post-processing of the
discrete adjoints, converges to the weak adjoint on the entire time
interval. This result is based on the linear convergence of the discrete
adjoints to the solution of \eqref{eq:aIVP} on the inner time interval which is
shown as well. 

This paper is organized as follows. In Section \ref{sec:Hadj}  we derive the
adjoint differential equation as part of the optimality conditions of an
infinite-dimensional constrained variational problem in Hilbert spaces. The BDF
method and its discrete adjoint scheme generated by internal numerical
differentiation techniques are then described in Section \ref{sec:BDF_aBDF}. In
Section \ref{sec:B_CVP} we present the optimality conditions of the
constrained
variational problem embedded into the Banach space of all continuously
differentiable functions. After showing the well-posedness of the optimality
conditions and their relation to the Hilbert space optimality conditions, we
extend the setting to capture the space of all functions that are
continuous and piecewise
continuously differentiable. For the Petrov-Galerkin discretization of
Section \ref{sec:PGappr} we choose suitable finite element spaces that
yield equivalence between the discretized optimality conditions and the BDF
scheme together with its discrete adjoint scheme. In Section \ref{sec:conv} we
start by proving the convergence of the discrete adjoints to the solution
of the Hilbert space adjoint equation on the inner time interval. Using this
result, we show the convergence of the finite element approximation to the
weak adjoint solution. Section \ref{sec:numres} presents numerical
results on a nonlinear test case with analytic solutions.

\section{Initial value problems and their adjoints in a Hilbert space setting}
\label{sec:Hadj}

In this section, we derive the adjoint differential equation
in a Hilbert space functional-analytic setting. Our goal is to specify the
assumptions on the initial value
problem, to settle some notation, and to lay the groundwork for the
constructions that follow. In particular, we make explicit the connection
between the adjoint differential equation and the Lagrange multiplier of the
solution of a constrained variational problem in a Hilbert space setting based
on the Sobolev spaces usually found in finite element formulations.

\subsection{Existence, uniqueness and differentiability of the nominal solution}
\label{sec:EUDnomsol}

Assume that the right hand side $\vec{f}(t,\vec{y})$ of \eqref{eq:IVP} is
continuous on an open set $\set{D} \subset \real \times \real^d$ with
$(\ts,\vec{y}_\s)\in \set{D}$ and its first-order partial derivative
$\vec{f}_{\vec{y}}(t,\vec{y})$ is continuous on $\set{D}$. Thus, according to
the Picard-Lindel{\"o}f Theorem \cite{Hartman2002}, 
problem \eqref{eq:IVP} is well-posed in the sense of Hadamard, i.e. it admits a
unique solution depending continuously on the input data. Beyond that, the
solution $\vec{y}(t)$ is continuously differentiable on an open interval
$\set{I}$, see \cite{Hartman2002}, %
and we assume that $\tf$ is chosen such that $[\ts,\tf] \subset \set{I}$. Thus,
the solution $\vec{y}(t)$ of \eqref{eq:IVP} lies in the Banach space
$C^1[\ts,\tf]^d$ of all continuously differentiable functions from $[\ts,\tf]$
to $\real^d$ equipped with the usual norm. Furthermore, the solution
$\vec{y}(t)=\vec{y}(t;\ts,\vec{y}_\s)$ is continuously differentiable with
respect to $\vec{y}_\s$ and the derivatives $\vec{w}_i(t)=\partial
\vec{y}(t;\ts,\vec{y}_\s)/\partial (\vec{y}_\s)_i$ solve \cite{Hartman2002}
\bse\label{eq:fIVP}
\begin{align}
\dot{\vec{w}}_i(t) &= \vec{f}_{\vec{y}}(t,\vec{y}(t))\vec{w}_i(t),\quad t
\in(\ts,\tf] \label{eq:fIVP_ode}\\
\vec{w}_i(\ts) &= \vec{e}_i \label{eq:fIVP_ic}
\end{align}
\ese
where $\vec{e}_i$ is the $i$th unit vector, $i \in \{ 1,\dots,d\}$. Moreover,
$\vec{w}_i(t)$ exists uniquely and is continuously differentiable on $[\ts,\tf]$
since the partial derivative of the right hand side of \eqref{eq:fIVP} with
respect to $\vec{w}_i$ is continuous in $(t,\vec{w}_i)$. The residual of
\eqref{eq:IVP_ode}
\begin{align}
 \vec{\rho}(\vec{y}):= \dot{\vec{y}}(\cdot)-\vec{f}(\cdot,\vec{y}(\cdot))
\end{align}
lies in the Banach space $C^0[\ts,\tf]^d$ of all continuous functions from
$[\ts,\tf]$ to $\real^d$ equipped with the standard norm
$\nrm{\vec{g}}{C^0[\ts,\tf]^d} = \sum_{i=1}^d \nrm{g_i}{C^0[\ts,\tf]}$ where
$\nrm{g_i}{C^0[\ts,\tf]}=\max_{t \in [\ts,\tf]} \,\abs{g_i(t)}$.

\subsection{Lagrange multipliers and adjoint differential equations}
\label{sec:HLag}

The core of this section is the identification of the adjoint as the Lagrange
multiplier of a constrained optimization problem in a functional-analytic
setting. The ideas described here are of course not new. However, the
setting for the case of ordinary differential equations is fundamental for this
contribution. Since we have not found it in the literature, we include here a
detailed derivation.

Recall that functions in $C^0[\ts,\tf]^d$, restricted to the open interval
$(\ts,\tf)$, form a dense subset of the space $L^2(\ts,\tf)^d$ of all
quadratically Lebesgue-integrable functions. Similarly, recall that the subset
$C^1[\ts,\tf]^d$ is dense in the Sobolev space $H^1(\ts,\tf)^d$ of all
$L^2(\ts,\tf)^d$-functions with weak derivative in $L^2(\ts,\tf)^d$ (see
\cite[Ch.3]{Adams2003}). %
Furthermore, both spaces $L^2(\ts,\tf)^d$ and $H^1(\ts,\tf)^d$ are Hilbert
spaces.

Knowing this, we embed the initial value problem \eqref{eq:IVP} into an
optimization framework and derive the adjoint differential equation as part of
the first-order necessary optimality conditions. To this end, we consider the
constrained variational problem
\bse\label{eq:OCP}
\begin{align}
\min_{\vec{y}} \enspace & J(\vec{y}(\tf))\\
\operatorname{s.t.} \enspace &\dot{\vec{y}}(t) = \vec{f}(t,
\vec{y}(t)),\quad t\in
(\ts,\tf]  \\
&\vec{y}(\ts) = \vec{y}_\text{s}
\end{align}
\ese
which is equivalent to evaluating $J(\vec{y}(\tf))$ in the solution of
\eqref{eq:IVP}. Considering \eqref{eq:OCP} on the space $H^1(\ts,\tf)^d$,
the Hilbert space Lagrangian $\HLag: H^1(\ts,\tf)^d
\times L^2(\ts,\tf)^d \rightarrow \real$ of \eqref{eq:OCP} using the 
$L^2$-scalar product is
\begin{align}
\HLag(\vec{y},\vec{\lambda}):= J(\vec{y}(\tf)) - \int_\ts^\tf
\vec{\lambda}^\T(t)
\left[ \dot{\vec{y}}(t)-\vec{f}(t,\vec{y}(t))\right]\td t -
\vec{\lambda}^\T(\ts) \left[ \vec{y}(\ts)-\vec{y}_\s\right] \nonumber
\end{align}
where $\vec{\lambda}$ is the Lagrange multiplier. The optimality condition of
\eqref{eq:OCP} is based on the Fr\'{e}chet derivative of $\HLag$ at
$(\vec{y},\vec{\lambda})$ in direction $(\vec{w},\vec{\chi})$ which exists
due to Fr\'{e}chet differentiability of $J$ and \cite[Ch.0\S0.2.5]{Ioffe1979}
\begin{align}
\HLag^\prime&(\vec{y},\vec{\lambda})(\vec{w},\vec{\chi})= 
\HLag_{\vec{y}}(\vec{y},\vec{\lambda})(\vec{w}) + 
\HLag_{\vec{\lambda}}(\vec{y},\vec{\lambda})(\vec{\chi}) 
\nonumber\\
=&\left\lbrace 
\displaystyle J^\prime(\vec{y}(\tf)) \vec{w}(\tf) - \int_\ts^\tf
\vec{\lambda}^\T(t) \left[
\dot{\vec{w}}(t)-\vec{f}_{\vec{y}}(t,\vec{y}(t))\vec{w}(t) \right] \td t -
 \vec{\lambda}^\T(\ts) \vec{w}(\ts) \right\rbrace \nonumber\\
&+ \left\lbrace 
\displaystyle- \int_\ts^\tf \vec{\chi}^\T(t) \left[
\dot{\vec{y}}(t)-\vec{f}(t,\vec{y}(t))\right] \td t - \vec{\chi}^\T(\ts) 
\left[ \vec{y}(\ts)-\vec{y}_\s \right] 
\right\rbrace. \nonumber
\end{align}
The necessary condition for a stationary point $(\vec{y},\vec{\lambda})\in
H^1(\ts,\tf)^d \times L^2(\ts,\tf)^d$ of \eqref{eq:OCP} is that
$\HLag^\prime(\vec{y},\vec{\lambda})(\vec{w},\vec{\chi})=0$ holds for all
directions $(\vec{w},\vec{\chi})\in  H^1(\ts,\tf)^d \times L^2(\ts,\tf)^d$.
Choosing $\vec{w}=\vec{0} \in  H^1(\ts,\tf)^d $ and only
varying $\vec{\chi} \in L^2(\ts,\tf)^d$ the necessary condition reads
\begin{align}\label{eq:Hvarform}
\int_\ts^\tf \vec{\chi}^\T(t) \left[
\dot{\vec{y}}(t)-\vec{f}(t,\vec{y}(t))\right]\td t + \vec{\chi}^\T(\ts) 
\left[ \vec{y}(\ts)-\vec{y}_\s \right] = 0,\; \forall \vec{\chi}
\end{align}
which possesses the same unique solution $\vec{y}\in C^1[\ts,\tf]^d$ as
\eqref{eq:IVP}. Taking now $\vec{\chi}=\vec{0} \in L^2(\ts,\tf)^d$ and only
varying $\vec{w} \in H^1(\ts,\tf)^d$ one obtains using integration by parts 
\begin{align}
\left[J^\prime(\vec{y}(\tf))-\vec{\lambda}^\T(\tf)\right] \vec{w}(\tf)-
 \int^\ts_\tf \left[\dot{\vec{\lambda}}(t)+ \vec{f}^\T_{\vec{y}}(t,\vec{y}(t))
\vec{\lambda}(t)\right]^\T \vec{w}(t) \td t= 0,\; \forall \vec{w} \nonumber
\end{align}
which possesses the same solution as \eqref{eq:aIVP}. Under the assumptions of
Section \ref{sec:EUDnomsol}, the unique solution $\vec{\lambda}(t)$ of
\eqref{eq:aIVP} is continuously differentiable on $[\ts,\tf]$ and depends
continuously on $J^\prime(\vec{y}(\tf))^\T$.

\section{Efficient solution of initial value problems and sensitivity
generation}
\label{sec:BDF_aBDF}

We now review the numerical solution of ODEs using BDF methods, and the
corresponding sensitivity generation using automatic differentiation 
techniques. We briefly introduce BDF methods with an emphasis on the
trajectories they define as functions of time. Then, we show how to
obtain discrete adjoints in the BDF context, and review what is known so far
about their relation to the solution of \eqref{eq:aIVP}.

\subsection{Backward differentiation formula method}
\label{sec:BDF}

This section follows the lines of \cite[p.181ff and p.253f]{Shampine1994}.
Consider the backward differentiation formula method 
\bse\label{eq:BDF}
\begin{align}
\vec{y}_0 &= \vec{y}_\s \label{eq:BDF_ini}\\
\sum_{i=0}^{k_n} \al{i}{n} \vec{y}_{n+1-i} &= h_n
\vec{f}(t_{n+1},\vec{y}_{n+1}),\quad n=0,\dots , N-1
\label{eq:BDF_main}
\end{align}
\ese
with a self-starting procedure that begins with $k_0=1$ (implicit Euler) and
increases successively the order of the steps until the maximum order is
reached. Note that BDF methods are used up to order 6, since for higher order
they become unstable. In practical implementations both the stepsize $h_n$ and
the order $k_n$ are chosen adaptively to obtain better performance. The
numerical solution is computed at
discrete time points $\ts=t_0<t_1<\dots<t_N=\tf$ with $t_{n+1}=t_n+h_n$ and
$\vec{y}_n$ denotes the numerical approximation to the value $\vec{y}(t_n)$. The
coefficients $\al{i}{n}$ are determined by %
\begin{align}\label{eq:Lagpol}
\al{i}{n} = h_n \dot{L}_i^{(n)}(t_{n+1}), \text{
where } L_i^{(n)}(t) = \prod_{j=0,j\neq i}^{k_n} \frac{t-t_{n+1-j}}{t_{n+1-i} -
t_{n+1-j}}
\end{align}
are the fundamental Lagrangian polynomials. Thus, the coefficients depend on the
discrete time points and the order. In each step, the BDF method provides a
polynomial approximation to the solution $\vec{y}(t)$ of \eqref{eq:IVP} in a
natural way through the interpolation polynomial
\begin{align}
 \vec{y}(t)\big\vert_{t \in [t_n, t_{n+1}]} \approx 
\sum_{i=0}^{k_n} L_i^{(n)}(t)\;
\vec{y}_{n+1-i},
\end{align}
also known as \emph{dense output}. The composition of all these polynomials
gives a continuous and piecewise continuously differentiable approximation to
the solution $\vec{y}(t)$ on the whole time interval $[\ts,\tf]$.

\subsection{Adjoint differentiation of BDF integration schemes}
\label{sec:aBDF}

The basic idea of internal numerical differentiation \cite{Bock1981} is to
differentiate the discretization scheme used to obtain the nominal
approximations $\vec{y}_0,\vec{y}_1,\dots,\vec{y}_N$ for specified adaptive
components $h_n$ and $k_n$ using automatic differentiation
(AD) techniques either in forward or in adjoint mode. Adjoint IND was first
described in \cite{Bock1987} for Runge-Kutta integration schemes and later on in
\cite{Bock1994} for BDF methods. Applying adjoint IND to the BDF scheme
\eqref{eq:BDF} we obtain the discrete adjoint scheme
\bse\label{eq:aBDF}
\begin{align}
\al{0}{N-1} \vec{\lambda}_N - J^\prime(\vec{y}_N)^\T &= h_{N-1}
\vec{f}^\T_{\vec{y}}(t_N,\vec{y}_N)\vec{\lambda}_N \label{eq:aBDF_ini}\\
\sum_{0 \leq i \leq N-1-n \atop i\leq k_{\max} } \al{i}{n+i}
\vec{\lambda}_{n+1+i}
&=  h_n \vec{f}^\T_{\vec{y}}(t_{n+1},\vec{y}_{n+1})\vec{\lambda}_{n+1},\;
n=N-2,\dots, 0  \label{eq:aBDF_main}
\end{align}
\ese
with input direction $J^\prime(\vec{y}_N)^\T$ and the convention $\al{i}{n}=0$
for $i>k_n$, $k_{\max}=\max_n \{k_n\}$ (see also \cite{Sandu2008}). This
scheme forms together with \eqref{eq:BDF} the optimality conditions of the
nonlinear program (NLP)
\begin{align}\label{eq:NLP}
\min_{\vec{y}} \enspace & J(\vec{y}_N)\enspace \operatorname{s.t.}\enspace
\text{ \eqref{eq:BDF}}
\end{align}
with $\vec{y}^\T:=\left[\begin{array}{cccc} \vec{y}^\T_0
& \vec{y}^\T_1 & \cdots & \vec{y}^\T_N \end{array}\right]$. This NLP is a
discretization of the constrained variational problem \eqref{eq:OCP}.

The discrete adjoints given by \eqref{eq:aBDF} are the exact derivatives of
the nominal integration scheme \eqref{eq:BDF} (beside round-off errors).
Furthermore, for a BDF scheme with constant order $k$, the discrete adjoint
$\vec{\lambda}_1$ converges with the same order $k$ to the value
$\vec{\lambda}(\ts)$ of the adjoint solution of \eqref{eq:aIVP}, cf.
\cite{Bock1987,Sandu2008}.\\
The discrete adjoints are generally inconsistent approximations to the solution
of \eqref{eq:aIVP} around a nominal approximation passing through
$\{\vec{y}_n\}_{n=0}^N$, see Figure \ref{fig:subf_adapt_discrAdj}. In the case
of constant order $k$ and constant stepsizes $h$, the discrete adjoints coming
from
the adjoint initialization and adjoint termination are
inconsistent as well, whereas the main part, i.e.
formula \eqref{eq:aBDF_main} with $n=N-k,\dots,k$, gives consistent
approximations of order $k$, see Figure \ref{fig:subf_BDF2_discrAdj}.

\begin{figure}
\centering
\subfigure[Non-adaptive BDF method with constant order and constant stepsizes.
\label{fig:subf_BDF2_discrAdj}]{\includegraphics[
width=0.47\textwidth]{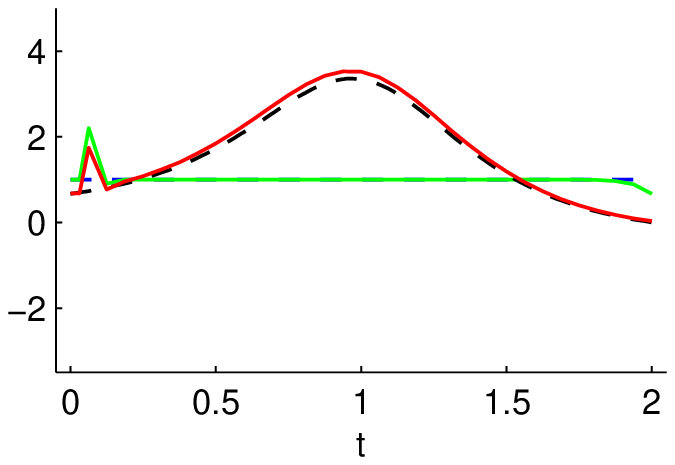}}
\quad
\subfigure[Adaptive BDF method with variable order and variable stepsize.
\label{fig:subf_adapt_discrAdj}]{\includegraphics[width =
0.47\textwidth]{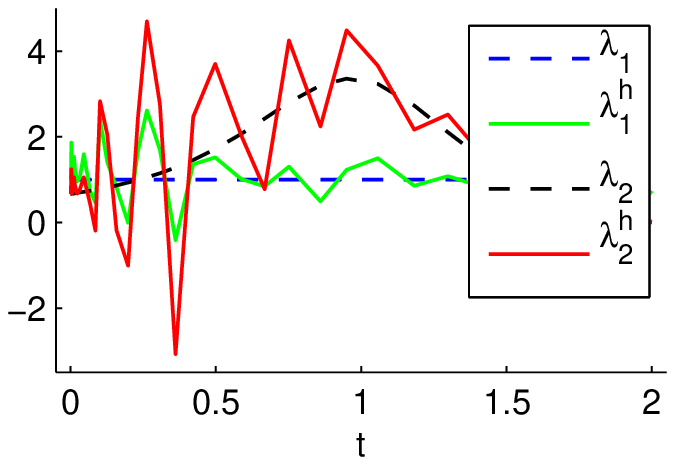}} 
\caption{Comparison of discrete adjoints $\vec{\lambda}^h =
[\lambda_1^h,\lambda_2^h]^\T$ and analytic solution
$\vec{\lambda}=[\lambda_1,\lambda_2]^\T$ of the Hilbert space adjoint
differential equation on the Catenary test case (see
Section \ref{sec:numres}).}
\label{fig:discrAdj}
\end{figure}

Due to the inconsistency of the discrete adjoint scheme \eqref{eq:aBDF} with the
adjoint differential equation \eqref{eq:aIVP} discretization and optimization
of \eqref{eq:OCP} do not commute in the commonly used Hilbert space setting.
This gives rise to the question for a new functional-analytic setting that is
suitable for multistep methods. The next sections are devoted to the
development of this setting.

\section{Solution of the constrained variational problem in a Banach space
setting}
\label{sec:B_CVP}

As seen in the previous section, the Hilbert space setting of Section
\ref{sec:Hadj} is not suitable to analyze multistep methods and their discrete
adjoints. Here, we propose to embed the constrained variational problem
\eqref{eq:OCP} into a Banach space setting and show the well-posedness of the
corresponding infinite-dimensional optimality conditions.

\subsection{General considerations}
\label{sec:gCons}

\paragraph{Duality pairing}
According to Section \ref{sec:EUDnomsol}, the residual $\vec{\rho}(\vec{y})$ of
\eqref{eq:IVP_ode} is an element of the space $C^0[\ts,\tf]^d$. Thus, we focus
on the duality pairing between the Banach space $C^0[\ts,\tf]^d$ and its dual.
The Riesz Representation Theorem \cite[Ch.5\S5.5]{Luenberger1969} 
states that for every continuous linear functional $\mathfrak{L}$ on
$C^0[\ts,\tf]$ exists a unique $\Psi \in \NBV[\ts,\tf]$ such that
\begin{align}\label{eq:int_g_Psi}
\mathfrak{L} [g] = \left\langle \Psi,g \right\rangle
_{\NBV[\ts,\tf],C^0[\ts,\tf]} = \int_\ts^\tf g(t) \td \Psi(t),
\end{align}
where the integral is the Riemann-Stieltjes integral
\cite[Ch.VIII\S6]{Natanson1977}. The Banach space $\NBV[\ts,\tf]$ consists of
all normalized functions of bounded variation on $[\ts,\tf]$ that are zero in
$\ts$ and continuous from the right on $(\ts,\tf)$. It is equipped with the
total variation norm
\begin{align}
\nrm{\Psi}{\NBV[\ts,\tf]} = \sup \sum_{i=1}^{m}
\abs{\Psi(t_i)-\Psi(t_{i-1})}\nonumber
\end{align}
where the supremum is taken over all partitions $\ts=t_0<\dots<t_m=\tf$ of
$[\ts,\tf]$. According to the Riesz Representation Theorem, for each $\Psi$ the
value of the total variation norm coincides with the value of the dual norm
given by
\begin{align}
\nrm{\Psi}{\NBV[\ts,\tf]}= \max_{\nrm{g}{C^0[\ts,\tf]}=1} \abs{\left\langle
\Psi,g \right\rangle _{\NBV[\ts,\tf],C^0[\ts,\tf]}}. \nonumber
\end{align}
Hence, we will always use the norm that is better suited in the particular
situation. The dual of the finite Cartesian product $C^0[\ts,\tf]^d$ is the
finite Cartesian product $\NBV[\ts,\tf]^d$ of the duals %
with duality pairing
\begin{align}
\left\langle \vec{\Psi},\vec{g} \right\rangle _{{\NBV}^d,\left( C^0\right)^d}
= \sum_{i=1}^d \left\langle \Psi_i,g_i \right\rangle_{\NBV,C^0} 
= \sum_{i=1}^d  \int_\ts^\tf g_i(t) \td \Psi_i(t) =:
\int_\ts^\tf \vec{g}(t) \td \vec{\Psi}(t)\nonumber
\end{align}
and dual norm $ \nrm{\Psi}{\NBV[\ts,\tf]^d} = \max_{1\leq i\leq d}
\nrm{\Psi_i}{\NBV[\ts,\tf]}$, see \cite[Ch.II\S12.1]{Wloka1971}.

\paragraph{Variational formulation of the initial value problem}
The variational formulation of \eqref{eq:IVP} on the described Banach spaces
reads: Find $\vec{y}\in C^1[\ts,\tf]^d$ with $\vec{y}(\ts)=\vec{y}_\s$ such that
\begin{align} \label{eq:varf}
\int_\ts^\tf \dot{\vec{y}}(t)-\vec{f}(t,\vec{y}(t)) \, \td\vec{\Gamma}(t) = 0
\quad \forall \vec{\Gamma} \in \NBV[\ts,\tf]^d.
\end{align}
This problem possesses at least one solution which is the strong
solution given by \eqref{eq:IVP}. The uniqueness follows from the fact that for 
continuous functions $g \in C^0[\ts,\tf]$ it holds
\begin{align}
\int_\ts^\tf g(t) \, \td\Psi (t)=0 \quad \forall \Psi \in \NBV[\ts,\tf] \quad 
\Rightarrow  \quad g=0. \nonumber
\end{align}
Thus, both formulations \eqref{eq:IVP} and \eqref{eq:varf} give the same
solution $\vec{y}(t)$ and \eqref{eq:varf} is well-posed according to the
well-posedness of \eqref{eq:IVP}.

\subsection{Infinite-dimensional optimality conditions}

Considering the constrained variational problem \eqref{eq:OCP} on the function
space $C^1[\ts,\tf]^d$, the Lagrangian $\Lag:C^1[\ts,\tf]^d \times
\NBV[\ts,\tf]^d \times \real^d \rightarrow \real$ is given by
\begin{align}\label{eq:BLag}
 \Lag(\vec{y},\vec{\Lambda},\vec{l}) := J(\vec{y}(\tf)) - \int_\ts^\tf
\dot{\vec{y}}(t)-\vec{f}(t,\vec{y}(t)) \, \td\vec{\Lambda}(t) -
\vec{l}^\T \left[\vec{y}(\ts)-\vec{y}_\s \right]
\end{align}
where the Lagrange multipliers $\vec{l}$ and $\vec{\Lambda}$ lie in the
corresponding dual spaces $\real^d$ and $\NBV[\ts,\tf]^d$. The Lagrangian is
based on the variational formulation \eqref{eq:varf} and includes the initial
condition using an additional Lagrange multiplier. We first state the central
theorem of this section which describes the stationary point of $\Lag$ and defer
the proof for the end of the section.

\begin{theorem}\label{theo:Boptcond}
The optimality conditions of the constrained variational problem \eqref{eq:OCP}
on $C^1[\ts,\tf]^d$, i.e.
\bse\label{eq:Boptcond}
\begin{align}
J^\prime(\vec{y}(\tf)) \vec{w}(\tf) - \int_\ts^\tf
\dot{\vec{w}}(t)-\vec{f}_{\vec{y}}(t,\vec{y}(t))\vec{w}(t) \,
\td\vec{\Lambda}(t) - \vec{l}^\T \vec{w}(\ts) &=0,\label{eq:Boptcond_adj}\\
-\int_\ts^\tf \dot{\vec{y}}(t)-\vec{f}(t,\vec{y}(t)) \,
\td \vec{\Gamma}(t) &=0,\label{eq:Boptcond_nom} \\
- \vec{r}^\T \left[\vec{y}(\ts)-\vec{y}_\s \right]&=0,\label{eq:Boptcond_ic}\\
\forall (\vec{w},\vec{\Gamma},\vec{r}) \in C^1[\ts,\tf]^d \times
\NBV[\ts,\tf]^d \times \real^d, \nonumber
\end{align}
\ese
possess a unique solution $(\vec{y},\vec{\Lambda},\vec{l})$ in
$C^1[\ts,\tf]^d \times \NBV[\ts,\tf]^d \times \real^d$. Moreover, $\vec{y}(t)$
is the solution of \eqref{eq:IVP}, and $\vec{l}$ and $\vec{\Lambda}(t)$ are
given in terms of the adjoint solution $\vec{\lambda}(t)$
of \eqref{eq:aIVP}
\begin{align}\label{eq:def_adjsol}
\vec{l}&=\vec{\lambda}(\ts),\quad
\vec{\Lambda}(t)=\int_{\ts}^t \vec{\lambda}(\tau) \td \tau ,
\end{align}
with componentwise integration.
\end{theorem}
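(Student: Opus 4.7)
The plan is to split the three optimality conditions and handle them in turn, exploiting that \eqref{eq:Boptcond_nom}--\eqref{eq:Boptcond_ic} decouple from \eqref{eq:Boptcond_adj} in the variables $\vec{y}$, $(\vec{\Lambda},\vec{l})$. First I would observe that \eqref{eq:Boptcond_nom} is precisely the variational formulation \eqref{eq:varf} and \eqref{eq:Boptcond_ic} is the pointwise initial condition (taking $\vec{r}=\vec{e}_i$). By the well-posedness established at the end of Section~\ref{sec:gCons}, these two conditions admit a unique $\vec{y}\in C^1[\ts,\tf]^d$, namely the strong solution of \eqref{eq:IVP}. From Section~\ref{sec:HLag} we may then invoke the fact that \eqref{eq:aIVP} has a unique $C^1$ solution $\vec{\lambda}$ along this nominal trajectory.

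Next I would verify existence of the adjoint multipliers by checking that the candidates $\vec{\Lambda}(t)=\int_\ts^t\vec{\lambda}(\tau)\td\tau$ and $\vec{l}=\vec{\lambda}(\ts)$ satisfy \eqref{eq:Boptcond_adj}. Because $\vec{\Lambda}$ is absolutely continuous with continuous density $\vec{\lambda}$, it lies in $\NBV[\ts,\tf]^d$ and the Riemann--Stieltjes integral against $\td\vec{\Lambda}$ collapses to an ordinary Lebesgue integral against $\vec{\lambda}(t)\td t$. Standard integration by parts on the $\dot{\vec{w}}^\T\vec{\lambda}$ term then rearranges \eqref{eq:Boptcond_adj} into
\begin{align*}
\bigl[J^\prime(\vec{y}(\tf))-\vec{\lambda}^\T(\tf)\bigr]\vec{w}(\tf)
+\int_\ts^\tf \vec{w}^\T(t)\bigl[\dot{\vec{\lambda}}(t)+\vec{f}_{\vec{y}}^\T(t,\vec{y}(t))\vec{\lambda}(t)\bigr]\td t
+\bigl[\vec{\lambda}^\T(\ts)-\vec{l}^\T\bigr]\vec{w}(\ts)=0,
\end{align*}
and each bracket vanishes by the terminal condition, the adjoint ODE, and the definition of $\vec{l}$, respectively.

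For uniqueness, suppose $(\vec{\Lambda}',\vec{l}')$ is a second solution and set $\vec{\delta\Lambda}:=\vec{\Lambda}'-\vec{\Lambda}$, $\vec{\delta l}:=\vec{l}'-\vec{l}$. Subtracting the two versions of \eqref{eq:Boptcond_adj} yields
\begin{align*}
\int_\ts^\tf\bigl[\dot{\vec{w}}(t)-\vec{f}_{\vec{y}}(t,\vec{y}(t))\vec{w}(t)\bigr]\,\td\vec{\delta\Lambda}(t)+\vec{\delta l}^\T\vec{w}(\ts)=0\qquad\forall\,\vec{w}\in C^1[\ts,\tf]^d.
\end{align*}
The crux, and probably the only delicate step, is to show that the linear map $\vec{w}\mapsto \dot{\vec{w}}-\vec{f}_{\vec{y}}(\cdot,\vec{y}(\cdot))\vec{w}$ from $\{\vec{w}\in C^1[\ts,\tf]^d : \vec{w}(\ts)=\vec{0}\}$ onto $C^0[\ts,\tf]^d$ is surjective; this is immediate, though, because for any prescribed $\vec{g}\in C^0[\ts,\tf]^d$ the linear IVP $\dot{\vec{w}}=\vec{f}_{\vec{y}}(t,\vec{y}(t))\vec{w}+\vec{g}$ with $\vec{w}(\ts)=\vec{0}$ has a unique $C^1$ solution by the same Picard--Lindel\"of argument used in Section~\ref{sec:EUDnomsol}. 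Restricting to such $\vec{w}$ produces $\int_\ts^\tf \vec{g}^\T\td\vec{\delta\Lambda}=0$ for every $\vec{g}\in C^0[\ts,\tf]^d$, hence $\vec{\delta\Lambda}=\vec{0}$ in $\NBV[\ts,\tf]^d$ by the Riesz Representation Theorem \eqref{eq:int_g_Psi}. Re-inserting this into the displayed equation and varying $\vec{w}(\ts)$ over $\real^d$ then forces $\vec{\delta l}=\vec{0}$, completing the proof.
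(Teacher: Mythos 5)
Your proposal is correct and follows essentially the same route as the paper: the nominal equations reduce to the well-posed variational formulation, existence of $(\vec{\Lambda},\vec{l})$ is verified by collapsing the Riemann--Stieltjes integral to a Lebesgue integral and integrating by parts, and uniqueness rests on the surjectivity of $\vec{w}\mapsto\dot{\vec{w}}-\vec{f}_{\vec{y}}(\cdot,\vec{y}(\cdot))\vec{w}$ together with the uniqueness in the Riesz representation. The only (immaterial) difference is the order in the uniqueness step: the paper first kills $\vec{l}$ by testing with solutions of the homogeneous variational ODE and then varies $\vec{g}$ freely, whereas you first restrict to $\vec{w}(\ts)=\vec{0}$ to eliminate $\vec{\delta\Lambda}$ and recover $\vec{\delta l}=\vec{0}$ afterwards.
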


The necessary optimality condition for a stationary point
$(\vec{y},\vec{\Lambda},\vec{l})$ of the Lagrangian \eqref{eq:BLag} is given by
\begin{align}
\left(\begin{array}{c}
\Lag_{\vec{y}}(\vec{y},\vec{\Lambda},\vec{l})(\vec{w})\\ 
\Lag_{\vec{\Lambda}}(\vec{y},\vec{\Lambda},\vec{l})(\vec{\Gamma}) \\
\Lag_{\vec{l}}(\vec{y},\vec{\Lambda},\vec{l})(\vec{r})
\end{array}\right)
= \left(\begin{array}{c}0\\0\\0\end{array}\right),\quad 
\forall \vec{w} \in C^1[\ts,\tf]^d,\enskip \vec{\Gamma} \in
\NBV[\ts,\tf]^d,\enskip \vec{r}\in \real^d \nonumber
\end{align}
which is exactly \eqref{eq:Boptcond}. As equations
\eqref{eq:Boptcond_nom}-\eqref{eq:Boptcond_ic} are already given by
\eqref{eq:varf} and discussed over there, we now focus on equation
\eqref{eq:Boptcond_adj} of the
optimality conditions. Provided that $\vec{y}(t)$ is known, the adjoint problem
in variational formulation reads: Find
$\left(\vec{\Lambda},\vec{l}\right)\in \NBV[\ts,\tf]^d\times \real^d$ such that
\eqref{eq:Boptcond_adj} holds for all $\vec{w} \in C^1[\ts,\tf]^d$.

\begin{lemma}\label{lem:exsol_varf_adj}
For the solution $\vec{y}(t)$ of \eqref{eq:Boptcond_nom}-\eqref{eq:Boptcond_ic},
a corresponding adjoint solution $\left(\vec{\Lambda},\vec{l}\right)\in
\NBV[\ts,\tf]^d\times \real^d$ of \eqref{eq:Boptcond_adj} is provided by
\eqref{eq:def_adjsol}.
\end{lemma}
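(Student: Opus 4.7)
The plan is to verify \eqref{eq:Boptcond_adj} by direct substitution of the candidate $\vec{l}=\vec{\lambda}(\ts)$ and $\vec{\Lambda}(t)=\int_\ts^t \vec{\lambda}(\tau)\,\td\tau$, reducing the Riemann--Stieltjes integral to an ordinary Riemann integral, integrating by parts, and invoking the Hilbert space adjoint equation \eqref{eq:aIVP}.

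First I would check that the candidate lies in the correct space. Under the regularity assumptions of Section \ref{sec:EUDnomsol}, $\vec{\lambda}\in C^1[\ts,\tf]^d$, so each component $\Lambda_i(t)=\int_\ts^t \lambda_i(\tau)\,\td\tau$ is continuously differentiable, vanishes at $\ts$, is continuous (hence right-continuous) on $(\ts,\tf)$, and has bounded variation equal to $\int_\ts^\tf |\lambda_i(\tau)|\,\td\tau$. Thus $\vec{\Lambda}\in\NBV[\ts,\tf]^d$ and the duality pairing in \eqref{eq:Boptcond_adj} is well-defined. Because $\vec{\Lambda}$ is absolutely continuous with derivative $\vec{\lambda}$, the Riemann--Stieltjes integral against $\vec{\Lambda}$ reduces componentwise to a Riemann integral against $\vec{\lambda}(t)\,\td t$ (cf.\ \cite[Ch.VIII\S6]{Natanson1977}).

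Substituting the candidate and this reduction into \eqref{eq:Boptcond_adj} yields, for arbitrary $\vec{w}\in C^1[\ts,\tf]^d$,
\begin{align}
J^\prime(\vec{y}(\tf))\vec{w}(\tf) - \int_\ts^\tf \vec{\lambda}^\T(t)\left[\dot{\vec{w}}(t)-\vec{f}_{\vec{y}}(t,\vec{y}(t))\vec{w}(t)\right]\td t - \vec{\lambda}^\T(\ts)\vec{w}(\ts) = 0. \nonumber
\end{align}
Since $\vec{w},\vec{\lambda}\in C^1[\ts,\tf]^d$, classical integration by parts gives
\begin{align}
\int_\ts^\tf \vec{\lambda}^\T(t)\dot{\vec{w}}(t)\,\td t = \vec{\lambda}^\T(\tf)\vec{w}(\tf) - \vec{\lambda}^\T(\ts)\vec{w}(\ts) - \int_\ts^\tf \dot{\vec{\lambda}}^\T(t)\vec{w}(t)\,\td t. \nonumber
\end{align}
Inserting this above and collecting terms, the left-hand side becomes
\begin{align}
\left[J^\prime(\vec{y}(\tf))-\vec{\lambda}^\T(\tf)\right]\vec{w}(\tf) + \int_\ts^\tf \left[\dot{\vec{\lambda}}(t)+\vec{f}^\T_{\vec{y}}(t,\vec{y}(t))\vec{\lambda}(t)\right]^\T\vec{w}(t)\,\td t, \nonumber
\end{align}
which vanishes identically in $\vec{w}$ by the terminal condition \eqref{eq:aIVP_ic} and the adjoint ODE \eqref{eq:aIVP_ode}. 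This proves \eqref{eq:Boptcond_adj}.

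The argument is essentially routine; the only mild subtlety is the passage from the Stieltjes integral against $\vec{\Lambda}\in\NBV$ to a Riemann integral against $\vec{\lambda}\,\td t$, which is exactly where the $C^1$-regularity of the Hilbert space adjoint (established in Section \ref{sec:Hadj}) becomes essential, since it provides the absolute continuity of $\vec{\Lambda}$ and legitimizes the integration by parts. No uniqueness is claimed here; uniqueness of the Banach space multiplier is deferred to Theorem \ref{theo:Boptcond}.
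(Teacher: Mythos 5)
Your proposal is correct and follows essentially the same route as the paper's proof: both rest on the reduction of the Riemann--Stieltjes integral against the absolutely continuous $\vec{\Lambda}$ to a Riemann integral against $\vec{\lambda}(t)\,\td t$, followed by classical integration by parts and the adjoint equation \eqref{eq:aIVP}; you merely run the computation from \eqref{eq:Boptcond_adj} back to \eqref{eq:aIVP} rather than forward, and you spell out the membership of $\vec{\Lambda}$ in $\NBV[\ts,\tf]^d$ a bit more explicitly. No gaps.
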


\begin{proof}
Recall that the adjoint differential equation \eqref{eq:aIVP} has a unique
solution $\vec{\lambda} \in C^1[\ts,\tf]^d$ (cf. Section \ref{sec:HLag}).
Multiplying the transposed of \eqref{eq:aIVP_ode} from the right by
any $\vec{w}\in C^1[\ts,\tf]^d$, integrating over $[\ts,\tf]$ and adding the
transposed of \eqref{eq:aIVP_ic} multiplied by $\vec{w}(\tf)$ yields
\begin{align} \label{eq:stradjint}
\int^{\tf}_{\ts}
\left[\dot{\vec{\lambda}}(t)+\vec{f}_{\vec{y}}^\T(t,\vec{y}(t))\vec{\lambda}(t)
\right]^\T \vec{w}(t) \td t - \left[\vec{\lambda}(\tf)-
J^\prime(\vec{y}(\tf))^\T \right]^\T \vec{w}(\tf) =0 .
\end{align}
Integration by parts gives for all $\vec{w}\in C^1[\ts,\tf]^d$
\begin{align}
\int^{\tf}_{\ts}\vec{\lambda}^\T(t) \left[
\dot{\vec{w}}(t)-\vec{f}_{\vec{y}}(t,\vec{y}(t))\vec{w}(t)\right]\td t -
\vec{\lambda}^\T(\ts)\vec{w}(\ts) +
J^\prime(\vec{y}(\tf))\vec{w}(\tf)=0.\nonumber
\end{align}
Consequently, \eqref{eq:def_adjsol} provides a solution $(\vec{\Lambda},\vec{l})
\in \NBV[\ts,\tf]^d\times \real^d$ of \eqref{eq:Boptcond_adj}, since the
indefinite integral $\Lambda_i(t)=\int_\ts^t\lambda_i(\tau)\td \tau$ is a
normalized function of bounded variation \cite[Sec.32]{Kolmogorov1970} 
and it holds $\int_\ts^\tf g(t)\td
\Lambda_i(t)= \int_\ts^\tf \Lambda^\prime_i(t)g(t)\td t=\int_\ts^\tf
\lambda_i(t)g(t)\td t$, cf. \cite[Ch.VIII\S6]{Natanson1977}.
\end{proof}

The next lemma proves the uniqueness of the weak adjoint solution.

\begin{lemma}\label{lem:usol_varf_adj}
For the solution $\vec{y}(t)$ of \eqref{eq:Boptcond_nom}-\eqref{eq:Boptcond_ic},
the corresponding adjoint solution $(\vec{\Lambda},\vec{l}) \in \NBV[\ts,\tf]^d
\times \real^d$ of \eqref{eq:Boptcond_adj} is unique.
\end{lemma}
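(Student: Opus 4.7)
The plan is to exploit the linearity of \eqref{eq:Boptcond_adj} in $(\vec{\Lambda},\vec{l})$, reduce to a homogeneous identity, and then choose two families of test functions $\vec{w} \in C^1[\ts,\tf]^d$ that sequentially isolate $\vec{l}$ and $\vec{\Lambda}$. Given two solutions $(\vec{\Lambda}_1,\vec{l}_1)$ and $(\vec{\Lambda}_2,\vec{l}_2)$, set $\vec{\Delta} := \vec{\Lambda}_1-\vec{\Lambda}_2 \in \NBV[\ts,\tf]^d$ and $\vec{m} := \vec{l}_1-\vec{l}_2 \in \real^d$. Since the nominal $\vec{y}(t)$ is fixed, subtracting the two instances of \eqref{eq:Boptcond_adj} cancels the term $J^\prime(\vec{y}(\tf))\vec{w}(\tf)$ and yields
\begin{align*}
\int_\ts^\tf \bigl[\dot{\vec{w}}(t)-\vec{f}_{\vec{y}}(t,\vec{y}(t))\vec{w}(t)\bigr]\, \td\vec{\Delta}(t) + \vec{m}^\T \vec{w}(\ts) = 0
\end{align*}
for all $\vec{w}\in C^1[\ts,\tf]^d$.

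To isolate $\vec{m}$, I would first insert test functions that annihilate the Stieltjes integrand. For any $\vec{v}_0 \in \real^d$, the linear initial value problem $\dot{\vec{w}}(t) = \vec{f}_{\vec{y}}(t,\vec{y}(t))\vec{w}(t)$, $\vec{w}(\ts)=\vec{v}_0$, admits a unique solution in $C^1[\ts,\tf]^d$ under the assumptions of Section \ref{sec:EUDnomsol} (cf.~\eqref{eq:fIVP}). For such $\vec{w}$ the integral vanishes identically, so the identity reduces to $\vec{m}^\T \vec{v}_0 = 0$; varying $\vec{v}_0 \in \real^d$ forces $\vec{m}=\vec{0}$.

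To conclude $\vec{\Delta}=\vec{0}$ in $\NBV[\ts,\tf]^d$, the key observation is that the linear operator
\begin{align*}
T : C^1[\ts,\tf]^d \to C^0[\ts,\tf]^d,\quad \vec{w} \mapsto \dot{\vec{w}} - \vec{f}_{\vec{y}}(\cdot,\vec{y}(\cdot))\vec{w},
\end{align*}
is surjective: for any $\vec{g} \in C^0[\ts,\tf]^d$, the inhomogeneous linear ODE $\dot{\vec{w}}(t) = \vec{f}_{\vec{y}}(t,\vec{y}(t))\vec{w}(t)+\vec{g}(t)$ with $\vec{w}(\ts)=\vec{0}$ has a unique $C^1$-solution by standard linear ODE theory (the coefficient matrix is continuous on $[\ts,\tf]$). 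Combined with $\vec{m}=\vec{0}$, the homogeneous identity becomes $\int_\ts^\tf \vec{g}(t)\,\td\vec{\Delta}(t)=0$ for every $\vec{g} \in C^0[\ts,\tf]^d$, so $\vec{\Delta}$ represents the zero continuous linear functional on $C^0[\ts,\tf]^d$. Applied componentwise, the uniqueness part of the Riesz Representation Theorem then gives $\vec{\Delta}=\vec{0}$ in $\NBV[\ts,\tf]^d$.

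The main obstacle is essentially the surjectivity of $T$, but this follows at once from the existence theorem for linear ODEs with continuous coefficients; everything else is a careful bookkeeping exercise within the duality pairing $\langle \cdot,\cdot\rangle_{\NBV^d,(C^0)^d}$ introduced in Section \ref{sec:gCons}.
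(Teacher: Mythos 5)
Your proposal is correct and follows essentially the same route as the paper's proof: subtracting two solutions is just the nullspace argument for the linear operator $\op{A}(\vec{\Lambda},\vec{l})$, the test functions solving the homogeneous variational ODE isolate $\vec{l}$ exactly as in the paper, and the surjectivity of $\vec{w}\mapsto\dot{\vec{w}}-\vec{f}_{\vec{y}}(\cdot,\vec{y}(\cdot))\vec{w}$ combined with the uniqueness part of the Riesz Representation Theorem is precisely the paper's final step.
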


\begin{proof}
Equation \eqref{eq:Boptcond_adj} is equivalent to
\begin{align}
\underbrace{\int_\ts^\tf
\dot{\vec{w}}(t)-\vec{f}_{\vec{y}}(t,\vec{y}(t))\vec{w}(t) \,
\td\vec{\Lambda}(t)+ \vec{l}^\T
\vec{w}(\ts)}_{=:\op{A}(\vec{\Lambda},\vec{l})(\vec{w})} =
\underbrace{J^\prime(\vec{y}(\tf)) \vec{w}(\tf)}_{=: B(\vec{w})} \quad \forall
\vec{w}\in C^1[\ts,\tf]^d  \nonumber
\end{align}
where $B$ and $\op{A}(\vec{\Lambda},\vec{l})$ are linear functionals on
$C^1[\ts,\tf]^d$ and $\op{A}:\NBV[\ts,\tf]^d \times \real^d \rightarrow
\left( C^1[\ts,\tf]^d\right)^\prime$ is linear in $(\vec{\Lambda},\vec{l})$. We
have to show that $\mN(\op{A}) = \left\lbrace (\vec{0},\vec{0})\right\rbrace$,
where the nullspace of $\op{A}$ is given by
\begin{align}
\mN(\op{A}) = \left\lbrace (\vec{\Lambda},\vec{l}) \in \NBV[\ts,\tf]^d \times
\real^d: \;
\op{A}(\vec{\Lambda},\vec{l})(\vec{w})=0 \quad \forall \vec{w}\in
C^1[\ts,\tf]^d \right\rbrace .\nonumber
\end{align}
Due to Section \ref{sec:EUDnomsol}, for every initial value $\vec{w}_1(\ts)\in
\real^d$ there exists a function $\vec{w}_1\in C^1[\ts,\tf]^d$ that satisfies
\eqref{eq:fIVP_ode}. Inserting $\vec{w}_1$ in $\op{A}(\vec{\Lambda},\vec{l})$
then gives
\begin{align}
\op{A}(\vec{\Lambda},\vec{l})(\vec{w}_1)
= \int_\ts^\tf \vec{0} \,\td\vec{\Lambda}(t)+ \vec{l}^\T \vec{w}_1(\ts) 
= 0 +\vec{l}^\T \vec{w}_1(\ts) .\nonumber
\end{align}
Thus, $\vec{l}$ has to vanish in
order to ensure $\op{A}(\vec{\Lambda},\vec{l})(\vec{w})=0 \; \forall \vec{w}\in
C^1[\ts,\tf]^d$. Now, we search for functions $\vec{\Lambda} \in
\NBV[\ts,\tf]^d$ with
\begin{align}
\op{A}(\vec{\Lambda},\vec{0})(\vec{w})=\int_\ts^\tf
\dot{\vec{w}}(t)-\vec{f}_{\vec{y}}(t,\vec{y}(t))\vec{w}(t) \,
\td\vec{\Lambda}(t) =0 \quad \forall \vec{w}\in C^1[\ts,\tf]^d. \nonumber
\end{align}
With $\vec{g}(t):= \dot{\vec{w}}(t)-\vec{f}_{\vec{y}}(t,\vec{y}(t))\vec{w}(t)$,
it is the same to vary either $\vec{w}\in C^1[\ts,\tf]^d$ or $\vec{g} \in
C^0[\ts,\tf]^d$, since the inhomogeneous ODE possesses a unique solution
$\vec{w}(t)$ for every $\vec{g}(t)$. According to the uniqueness of $\Psi$ in
\eqref{eq:int_g_Psi} it holds
\begin{align}
\int_\ts^\tf \vec{g}(t) \, \td\vec{\Lambda}(t) =0 \quad \forall \vec{g}\in
C^0[\ts,\tf]^d \quad \Rightarrow \quad \vec{\Lambda} = \vec{0}.\nonumber
\end{align}
Consequently, $\mN(\op{A}) = \left\lbrace (\vec{0},\vec{0})\right\rbrace$ which
proves the uniqueness of the solution of \eqref{eq:Boptcond_adj}.
\end{proof}

With this knowledge at hand we can now come to the proof of Theorem
\ref{theo:Boptcond}.

\begin{proof}[of Theorem \ref{theo:Boptcond}]
As seen in Section \ref{sec:gCons}, the equations 
\eqref{eq:Boptcond_nom}-\eqref{eq:Boptcond_ic} have the same unique solution
$\vec{y}(t)$ as \eqref{eq:IVP} which implies their well-posedness. According
to Lemma \ref{lem:exsol_varf_adj}, a solution of \eqref{eq:Boptcond_adj} is
provided by \eqref{eq:def_adjsol}. Furthermore, it is the only solution of
\eqref{eq:Boptcond_adj} according to Lemma \ref{lem:usol_varf_adj}. Since
$\vec{\lambda}(t)$ depends continuously on $J^\prime(\vec{y}(\tf))^\T$ (cf.
Section \ref{sec:HLag}) this still holds for $\vec{\Lambda}(t)$ and $\vec{l}$.
Thus, \eqref{eq:Boptcond_adj} together with
\eqref{eq:Boptcond_nom}-\eqref{eq:Boptcond_ic} is well-posed.
\end{proof}

With the concept of weak solutions from partial differential equations
(see e.g. \cite{Johnson1987}), the triple $(\vec{y},\vec{\Lambda},\vec{l})$ is a
weak solution of \eqref{eq:IVP} and \eqref{eq:aIVP}, since it solves the
variational formulation \eqref{eq:Boptcond} of \eqref{eq:IVP} and
\eqref{eq:aIVP}. Thus, we will call $\vec{\Lambda}$ a \emph{weak adjoint
solution} of \eqref{eq:aIVP} or shortly \emph{weak adjoint}. Note that for the
nominal solution, the weak solution $\vec{y}$ defined by
\eqref{eq:Boptcond_ic}-\eqref{eq:Boptcond_nom} is directly the classical
solution of \eqref{eq:IVP}. Whereas for the adjoint, the weak solution
$\vec{\Lambda}$ is sufficiently regular such that a classical solution of
\eqref{eq:aIVP} is provided by $\vec{\Lambda}^\prime = \vec{\lambda}$.

\subsection{Extension of the infinite-dimensional optimality conditions}

As seen in Section \ref{sec:BDF} the approximations to the solution
of \eqref{eq:IVP} obtained from BDF methods are not continuously differentiable
on the whole interval $[\ts,\tf]$ but rather continuous and piecewise
continuously differentiable. To capture this case, an appropriate extension of
the trial space $C^1[\ts,\tf]^d$ is required. To this end, we employ a time grid
$\ts=t_0 < t_1 <\dots <t_N=\tf$ and a partition \label{partition} of $[\ts,\tf]$
using subintervals $I_n=(t_n,t_{n+1}]$ of length $h_n=t_{n+1}-t_n$ such that
$[\ts,\tf]=\{\ts\} \cup I_0 \cup\dots \cup I_{N-1}$. Choosing the trial space
as 
\begin{align} \label{eq:defspaceY}
Y[\ts,\tf]^d:=\left\lbrace \vec{y} \in C^0[\ts,\tf]^d:
\vec{y}\big\vert_{I_n} \in C_b^1(I_n)^d \right\rbrace,
\end{align}
where $C_b^1(I_n)$ is the space of all continuously differentiable and
bounded functions with bounded derivative \cite[Ch.1]{Adams2003}, %
the extended Lagrangian $\eLag:Y[\ts,\tf]^d\times \NBV[\ts,\tf]^d \times
\real^d \rightarrow \real$ of \eqref{eq:OCP} solved on the function space
$Y[\ts,\tf]^d$
is
\begin{align}
\eLag(\vec{y},\vec{\Lambda},\vec{l}) := J(\vec{y}(\tf)) -
\sum_{n=0}^{N-1}
\int_{I_n} \dot{\vec{y}}(t)-\vec{f}(t,\vec{y}(t)) \, \td\vec{\Lambda}(t) -
\vec{l}^\T \left[\vec{y}(\ts)-\vec{y}_\s \right]. \nonumber
\end{align}
The Lagrangian $\eLag$ is based on the extension $\hat{\mathfrak{L}}$ of
the linear functional $\mathfrak{L}$ given by \eqref{eq:int_g_Psi} from
$C^0[\ts,\tf]$ to $Y[\ts,\tf]$. The existence of $\hat{\mathfrak{L}}$ is
guaranteed due to \cite[p. 89]{Wloka1971}. We define the extended
Riemann-Stieltjes integral on $I_n=(t_n,t_{n+1}]$ using the partition
$t_n<\tau_0 < \tau_1 <\dots < \tau_m=t_{n+1}$ and the convention that
$\theta_k=\tau_{k-1}\in[\tau_{k-1},\tau_k]$ for $k=1,\dots,m$ by
\begin{align} \label{eq:extRSint}
 \int_{(t_n,t_{n+1}]} g(t) \td \Psi(t) = \lim_{m\rightarrow \infty}
\sum_{k=1}^m g(\tau_k)[\Psi(\tau_k)-\Psi(\tau_{k-1})]
\end{align}
such that
\begin{align}
\hat{\mathfrak{L}}[g] = \sum_{n=0}^{N-1} \int_{I_n}g(t)\td \Psi(t).\nonumber
\end{align}
This extension $\hat{\mathfrak{L}}$ restricted to the continuous functions
$g\in C^0[\ts,\tf]$ coincides with $\mathfrak{L}$. Thus, the same holds for the 
Lagrangian $\eLag$. Furthermore, if $g\in C^0[t_n,t_{n+1}]$ then
$\int_{t_n}^{t_{n+1}}g(t)\td\Psi(t)= \int_{I_n} g(t) \td \Psi(t)$.\\
With these definitions at hand, we first state the main result of the section.

\begin{theorem}\label{theo:exBoptcond}
The optimality conditions of the constrained variational problem \eqref{eq:OCP}
on $Y[\ts,\tf]^d$, i.e.
\bse\label{eq:exBoptcond}
\begin{align}
J^\prime(\vec{y}(\tf)) \vec{w}(\tf) - \sum_{n=0}^{N-1}
\int_{I_n}\dot{\vec{w}}(t)-\vec{f}_{\vec{y}}(t,\vec{y}(t))\vec{w}(t) \,
\td\vec{\Lambda}(t) - \vec{l}^\T \vec{w}(\ts) &=0,\label{eq:exBoptcond_adj}\\
-\sum_{n=0}^{N-1}
\int_{I_n} \dot{\vec{y}}(t)-\vec{f}(t,\vec{y}(t)) \,
\td \vec{\Gamma}(t) &=0,\label{eq:exBoptcond_nom} \\
- \vec{r}^\T \left[\vec{y}(\ts)-\vec{y}_\s \right]&=0,\label{eq:exBoptcond_ic}\\
\forall (\vec{w},\vec{\Gamma},\vec{r}) \in Y[\ts,\tf]^d \times
\NBV[\ts,\tf]^d \times \real^d, \nonumber
\end{align}
\ese
possess a unique solution $(\vec{y},\vec{\Lambda},\vec{l})$ in $Y[\ts,\tf]^d
\times \NBV[\ts,\tf]^d \times \real^d$ that coincides
with the solution of \eqref{eq:Boptcond}.
\end{theorem}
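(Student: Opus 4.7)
The plan is to exploit the inclusion $C^1[\ts,\tf]^d \subset Y[\ts,\tf]^d$ to reduce Theorem \ref{theo:exBoptcond} to Theorem \ref{theo:Boptcond}. Concretely, I would show that the unique solution $(\vec{y},\vec{\Lambda},\vec{l})$ of \eqref{eq:Boptcond} still satisfies the extended system \eqref{eq:exBoptcond}, and conversely that any solution of \eqref{eq:exBoptcond} must agree with it. Because the test space grows from $C^1$ to $Y$ in \eqref{eq:exBoptcond_adj} while all other ingredients are unchanged, the task splits cleanly into (i) verifying that the classical solution of \eqref{eq:IVP} is still the only element of $Y[\ts,\tf]^d$ satisfying \eqref{eq:exBoptcond_nom}--\eqref{eq:exBoptcond_ic}, and (ii) extending the adjoint equation to the enlarged test space and deriving uniqueness there.

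For step (i), I would mimic the uniqueness argument of Section \ref{sec:gCons} but using the extended Riemann--Stieltjes integral \eqref{eq:extRSint}. On each subinterval $I_n$ the function $\vec{y}$ is in $C_b^1(I_n)^d$, so $\dot{\vec{y}}(t)-\vec{f}(t,\vec{y}(t))$ is continuous on $(t_n,t_{n+1}]$. Testing \eqref{eq:exBoptcond_nom} against right-continuous Heaviside-type functions $\vec{\Gamma}\in\NBV[\ts,\tf]^d$ that jump inside a single $I_n$ isolates the integral on that subinterval, and the implication $\int g\,\td\Psi = 0$ for all $\Psi\in\NBV$ implies $g=0$ then forces the ODE pointwise on each $I_n$. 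Together with $\vec{y}(\ts)=\vec{y}_\s$ from \eqref{eq:exBoptcond_ic} and continuity at each knot $t_n$ built into the definition \eqref{eq:defspaceY} of $Y[\ts,\tf]^d$, this forces $\vec{y}$ to be the classical $C^1$ solution of \eqref{eq:IVP}; thus any $Y$-solution actually lies in $C^1[\ts,\tf]^d\subset Y[\ts,\tf]^d$.

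For step (ii), I would take the candidate $(\vec{\Lambda},\vec{l})$ from \eqref{eq:def_adjsol} and check \eqref{eq:exBoptcond_adj} directly for arbitrary $\vec{w}\in Y[\ts,\tf]^d$. Since $\vec{\Lambda}$ has the smooth density $\vec{\lambda}\in C^1[\ts,\tf]^d$, each $\int_{I_n}\,\td\vec{\Lambda}$ reduces to an ordinary Lebesgue integral against $\vec{\lambda}$, and integration by parts may be carried out on each $(t_n,t_{n+1}]$ separately. Summing over $n$, the boundary terms at each interior knot are $\vec{\lambda}^\T(t_n)\vec{w}(t_n^-)-\vec{\lambda}^\T(t_n)\vec{w}(t_n^+)$ and telescope to zero because $\vec{w}$ is continuous across $t_n$ and $\vec{\lambda}$ is globally continuous. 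What remains are precisely the endpoint contributions $J^\prime(\vec{y}(\tf))\vec{w}(\tf)-\vec{\lambda}^\T(\ts)\vec{w}(\ts)$ and the Lebesgue integral of $[\dot{\vec{\lambda}}+\vec{f}_{\vec{y}}^\T\vec{\lambda}]^\T\vec{w}$, which vanishes by \eqref{eq:aIVP_ode}. Uniqueness is then immediate: any other solution of \eqref{eq:exBoptcond_adj} must in particular satisfy it for all $\vec{w}\in C^1[\ts,\tf]^d$, and Lemma \ref{lem:usol_varf_adj} pins it down to the same $(\vec{\Lambda},\vec{l})$.

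The main obstacle I foresee is bookkeeping of the piecewise integration by parts on the extended Riemann--Stieltjes integral so that the telescoping at the knots is rigorously justified, rather than any deeper new difficulty; the continuity requirements baked into both $Y[\ts,\tf]^d$ and the regularity of $\vec{\lambda}$ are exactly what make the interior jumps disappear, so the argument is essentially a careful reorganisation of the proofs of Lemmas \ref{lem:exsol_varf_adj} and \ref{lem:usol_varf_adj}.
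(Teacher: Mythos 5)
Your proposal is correct and follows essentially the same route as the paper, which likewise splits the argument into the nominal part (forcing the ODE on each subinterval via localized test functions and using continuity of $\vec{f}$ and $\vec{y}$ to recover global $C^1$ regularity, cf.\ Lemmas \ref{lem:exsol_extform} and \ref{lem:usol_extform}) and the adjoint part (piecewise integration by parts with telescoping knot terms for existence, and restriction of test functions to $C^1[\ts,\tf]^d\subset Y[\ts,\tf]^d$ to inherit uniqueness from Lemma \ref{lem:usol_varf_adj}, cf.\ Lemmas \ref{lem:exsol_extvarf_adj} and \ref{lem:usol_extvarf_adj}). The only cosmetic difference is that the paper additionally tests with single-jump step functions to pin down the ODE at the knots $t_n$ explicitly, a detail your half-open-interval formulation absorbs implicitly.
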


We start with considering the nominal equations
\eqref{eq:exBoptcond_ic}-\eqref{eq:exBoptcond_nom}.

\begin{lemma}\label{lem:exsol_extform}
The solution $\vec{y}(t)$ of \eqref{eq:Boptcond_ic}-\eqref{eq:Boptcond_nom}
solves the extended variational formulation
\eqref{eq:exBoptcond_ic}-\eqref{eq:exBoptcond_nom}.
\end{lemma}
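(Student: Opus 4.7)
The plan is to show that the classical solution, which lives in $C^1[\ts,\tf]^d$, a subspace of $Y[\ts,\tf]^d$, automatically satisfies the extended variational identities because the extended Riemann--Stieltjes integral reduces to the ordinary one on continuous integrands.

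First, I would observe that by Theorem \ref{theo:Boptcond} the function $\vec{y}(t)$ solving \eqref{eq:Boptcond_ic}--\eqref{eq:Boptcond_nom} is in fact the classical solution of \eqref{eq:IVP}, hence lies in $C^1[\ts,\tf]^d \subset Y[\ts,\tf]^d$ and satisfies $\vec{y}(\ts)=\vec{y}_\s$. This already gives \eqref{eq:exBoptcond_ic} verbatim.

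Next, for \eqref{eq:exBoptcond_nom}, I would exploit the fact that the residual $\vec{\rho}(\vec{y})(t) = \dot{\vec{y}}(t) - \vec{f}(t,\vec{y}(t))$ is continuous on the closed interval $[\ts,\tf]$. Using the property stated immediately after \eqref{eq:extRSint} in the paper, namely that the extended integral coincides with the ordinary Riemann--Stieltjes integral for continuous integrands on each closed subinterval $[t_n,t_{n+1}]$, I can rewrite the sum over the subintervals $I_n$ as
\begin{align*}
\sum_{n=0}^{N-1}\int_{I_n}\vec{\rho}(\vec{y})(t)\,\td\vec{\Gamma}(t)
= \sum_{n=0}^{N-1}\int_{t_n}^{t_{n+1}}\vec{\rho}(\vec{y})(t)\,\td\vec{\Gamma}(t)
= \int_{\ts}^{\tf}\vec{\rho}(\vec{y})(t)\,\td\vec{\Gamma}(t),
\end{align*}
where the last equality follows from the standard additivity of the Riemann--Stieltjes integral with respect to the interval of integration (for a continuous integrand against an $\NBV$ measure). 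By \eqref{eq:Boptcond_nom} this last integral vanishes for every $\vec{\Gamma}\in \NBV[\ts,\tf]^d$, which yields \eqref{eq:exBoptcond_nom}.

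I do not anticipate a serious obstacle here: the argument is essentially a compatibility check between the two integral notions, and the decisive fact is that $\vec{y}\in C^1[\ts,\tf]^d$ makes the residual globally continuous, so no jumps at the grid points $t_n$ arise and the piecewise partition structure of $Y[\ts,\tf]^d$ plays no nontrivial role. The only subtlety worth a line of justification is to note that the endpoint convention $\theta_k=\tau_{k-1}$ in the definition \eqref{eq:extRSint} does not affect the limit when the integrand is continuous, so the extended and classical Riemann--Stieltjes values really do agree on each $[t_n,t_{n+1}]$.
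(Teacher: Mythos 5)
Your proposal is correct and follows essentially the same route as the paper's proof: both rest on the inclusion $C^1[\ts,\tf]^d\subset Y[\ts,\tf]^d$, the continuity of the residual, the additivity of the Riemann--Stieltjes integral over the subintervals, and the observation that the extended integral \eqref{eq:extRSint} agrees with the classical one for continuous integrands. The only cosmetic difference is that you read the chain of equalities from the sum of extended integrals back to the global integral, whereas the paper splits the global integral into the subinterval pieces first.
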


\begin{proof}
Let $\vec{y}(t)$ be the solution of
\eqref{eq:Boptcond_ic}-\eqref{eq:Boptcond_nom}. From $C^1[\ts,\tf]^d
\subset Y[\ts,\tf]^d$ follows that $\vec{y}\in Y[\ts,\tf]^d$. Since the
integral
$\int_\ts^\tf g_i(t)\td\Gamma_i(t)$ with $g_i(t):= \dot{y}_i(t)-f_i(t,y(t)) $
exists, also the integrals over the subintervals $\int_{t_n}^{t_{n+1}}
g_i(t) \td\Gamma_i(t)$ exist and it holds \cite[Ch.VIII\S6]{Natanson1977}
\begin{align}
\int_\ts^\tf g_i(t) \td\Gamma_i(t) = \sum_{n=0}^{N-1} 
\int_{t_n}^{t_{n+1}} g_i(t) \td\Gamma_i(t) = \sum_{n=0}^{N-1} 
\int_{I_n} g_i(t) \td\Gamma_i(t)\nonumber
\end{align}
where the second equality is due to the extension \eqref{eq:extRSint} of the
Riemann-Stieltjes integral, $i=1,\dots,d$. Thus, equation
\eqref{eq:Boptcond_nom} becomes $\forall
\vec{\Gamma}\in \NBV[\ts,\tf]^d$
\begin{align}
0 &= \int_\ts^\tf \dot{\vec{y}}(t)-\vec{f}(t,\vec{y}(t)) \, \td\vec{\Gamma}(t)
= \sum_{n=0}^{N-1} \int_{I_n} \dot{\vec{y}}(t)-\vec{f}(t,\vec{y}(t)) \,
\td\vec{\Gamma}(t) \nonumber
\end{align}
which coincides with \eqref{eq:exBoptcond_nom}.
\end{proof}

\begin{lemma}\label{lem:usol_extform}
The extended variational formulation
\eqref{eq:exBoptcond_nom}-\eqref{eq:exBoptcond_ic} possesses a unique solution
$\vec{y}(t)$.
\end{lemma}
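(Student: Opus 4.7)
The strategy is to show that any solution $\vec{y}\in Y[\ts,\tf]^d$ of the extended nominal equations \eqref{eq:exBoptcond_nom}-\eqref{eq:exBoptcond_ic} must coincide with the classical $C^1$ solution of \eqref{eq:IVP}, which yields uniqueness (and incidentally picks up the extra regularity at the grid points). Existence is already provided by Lemma \ref{lem:exsol_extform}, so the task reduces to uniqueness.

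First I localize the summed identity \eqref{eq:exBoptcond_nom} to a single subinterval $I_n$. Fix $n \in \{0,\ldots,N-1\}$ and $i \in \{1,\ldots,d\}$, and given $\Psi \in \NBV[t_n,t_{n+1}]$, build $\vec{\Gamma}\in \NBV[\ts,\tf]^d$ by letting $\Gamma_i$ be the extension of $\Psi$ by zero on $[\ts,t_n]$ and by the constant $\Psi(t_{n+1})$ on $[t_{n+1},\tf]$, and by setting $\Gamma_j \equiv 0$ for $j \neq i$; the normalization $\vec{\Gamma}(\ts)=\vec{0}$ and right-continuity on $(\ts,\tf)$ are automatic. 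All summands of \eqref{eq:exBoptcond_nom} other than the one over $I_n$ vanish, and since $\dot{y}_i - f_i(\cdot,\vec{y})$ is continuous on $[t_n,t_{n+1}]$ (because $\vec{y}\big\vert_{I_n}\in C_b^1(I_n)^d$ extends continuously up to $t_n$), the extended Riemann-Stieltjes integral agrees with the classical one, yielding
\begin{align}
\int_{t_n}^{t_{n+1}} \bigl[\dot{y}_i(t) - f_i(t,\vec{y}(t))\bigr]\, \td \Psi(t) = 0 \quad \forall \Psi \in \NBV[t_n,t_{n+1}]. \nonumber
\end{align}
The uniqueness clause of the Riesz representation on $C^0[t_n,t_{n+1}]$, used exactly as in the proof of Lemma \ref{lem:usol_varf_adj}, then forces $\dot{y}_i \equiv f_i(\cdot,\vec{y})$ on $[t_n,t_{n+1}]$. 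Since this holds for every $i$ and every $n$, we have $\dot{\vec{y}}(t) = \vec{f}(t,\vec{y}(t))$ pointwise on each $I_n$.

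Combined with the initial condition \eqref{eq:exBoptcond_ic}, I then iterate Picard--Lindel{\"o}f across the grid. On $[\ts,t_1]$, $\vec{y}$ solves \eqref{eq:IVP} in the classical sense and must therefore coincide with the unique $C^1$ solution furnished by the assumptions of Section \ref{sec:EUDnomsol}; this pins down $\vec{y}(t_1)$. Continuity of $\vec{y} \in Y[\ts,\tf]^d$ at $t_1$ supplies the matching initial datum on $I_1$, and induction over $n$ extends the identification to all of $[\ts,\tf]$. In particular every $Y$-solution in fact lies in $C^1[\ts,\tf]^d$ and is unique.

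The main obstacle is the bookkeeping at the grid nodes: confirming that the single-interval test functions $\vec{\Gamma}$ are indeed admissible NBV elements on the whole interval, and that the extended Riemann--Stieltjes integral defined by \eqref{eq:extRSint} collapses to the classical integral on the continuous residual so that the Riesz uniqueness argument transplants cleanly from $[\ts,\tf]$ to each $[t_n,t_{n+1}]$. Both points are technical but rest directly on the density and representation facts already assembled in Section \ref{sec:gCons}.
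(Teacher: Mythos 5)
Your proof is correct, and its first half---localizing \eqref{eq:exBoptcond_nom} to a single subinterval by testing against elements of $\NBV[\ts,\tf]^d$ supported there---is exactly the paper's opening move. Where you diverge is in how the local information is glued back together. The paper extracts two separate necessary conditions: the residual vanishes on each open interval $(t_n,t_{n+1})$ (via the fundamental theorem of variational calculus), and it also vanishes at the nodes $t_n$ (via constant test functions with a single jump there); continuity of $\vec{f}$ and of $\vec{y}$ then upgrades $\vec{y}$ to $C^1[\ts,\tf]^d$, so the uniqueness already established for \eqref{eq:Boptcond_nom}--\eqref{eq:Boptcond_ic} applies. You instead skip the node conditions entirely and iterate the classical Picard--Lindel{\"o}f uniqueness across the grid, using continuity of $\vec{y}$ at each $t_n$ to hand over the initial datum from one subinterval to the next. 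Your route is somewhat more elementary (no jump test functions, no global regularity upgrade before invoking uniqueness), while the paper's version has the side benefit of exhibiting directly that every $Y$-solution is globally $C^1$, a fact you recover only as a corollary. One small caveat in your write-up: membership in $C_b^1(I_n)^d$ bounds $\dot{\vec{y}}$ on $I_n$ but does not by itself guarantee that $\dot{\vec{y}}$ extends continuously to the closed interval $[t_n,t_{n+1}]$, so the residual need not lie in $C^0[t_n,t_{n+1}]^d$ a priori, and the Riesz uniqueness argument as you state it only yields $\dot{y}_i=f_i(\cdot,\vec{y})$ on the open interval together with the left limit at $t_{n+1}$. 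This is harmless: once the ODE holds on $(t_n,t_{n+1})$, continuity of $\vec{f}(\cdot,\vec{y}(\cdot))$ supplies the continuous extension of the derivative, and in any case your Picard iteration only needs the integral form of the equation on each subinterval.
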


\begin{proof}
Let $\vec{y}(t)$ be a solution of
\eqref{eq:exBoptcond_nom}-\eqref{eq:exBoptcond_ic}. The space $\NBV[\ts,\tf]^d$
contains, in particular, the functions that vanish everywhere except on
$(t_n,t_{n+1})$. Thus, a necessary condition for $\vec{y}(t)$ being a solution
of \eqref{eq:exBoptcond_nom}-\eqref{eq:exBoptcond_ic} is that each addend has
to vanish, i.e. $\int_{I_n} \dot{\vec{y}}(t)-\vec{f}(t,\vec{y}(t))
\td\vec{\Gamma}(t)=0 \enskip \forall \vec{\Gamma} \in \NBV(I_n)^d$ with
$\vec{\Gamma}(t_{n+1})=\vec{0}$. The fundamental theorem of
variational calculus yields $\dot{\vec{y}}(t)-\vec{f}(t,\vec{y}(t))=\vec{0}$ on
$(t_n,t_{n+1})$ for all $n=0,\dots,N-1$. On the other hand, $\NBV[\ts,\tf]^d$
contains also the constant
functions having a single jump in $t_n$. They give the necessary conditions
$\dot{\vec{y}}(t_n)-\vec{f}(t_n,\vec{y}(t_n))=\vec{0}$ for $n=1,\dots,N$. Since
$\vec{f}(t,\vec{y})$ is continuous in both variables and $\vec{y}\in
C^0[\ts,\tf]^d$, $\vec{y}(t)$ is necessarily continuously differentiable on
$[\ts,\tf]$. Thus, every solution of
\eqref{eq:exBoptcond_nom}-\eqref{eq:exBoptcond_ic} satisfies
\eqref{eq:Boptcond_nom}-\eqref{eq:Boptcond_ic} which possesses a unique
solution.
\end{proof}

As conclusion of this lemma, the dependency of the solution of the extended
variational formulation \eqref{eq:exBoptcond_nom}-\eqref{eq:exBoptcond_ic} on
the input data is continuous and thus the problem is well-posed.\\

Now, we focus on the adjoint problem in extended variational formulation which
is for a given $\vec{y}(t)$: Find $(\vec{\Lambda},\vec{l}) \in
\NBV[\ts,\tf]^d \times \real^d$ such that \eqref{eq:exBoptcond_adj} holds for
all $\vec{w}\in Y[\ts,\tf]^d$.

\begin{lemma}\label{lem:exsol_extvarf_adj}
For the solution $\vec{y}(t)$ of
\eqref{eq:exBoptcond_nom}-\eqref{eq:exBoptcond_ic}, the corresponding adjoint
solution $\left(\vec{\Lambda},\vec{l}\right)\in \NBV[\ts,\tf]^d\times
\real^d$ of \eqref{eq:exBoptcond_adj} is provided by \eqref{eq:def_adjsol}.
\end{lemma}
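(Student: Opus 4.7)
The plan is to imitate the proof of Lemma \ref{lem:exsol_varf_adj}, but piecewise on the subintervals $I_n$, and to check that the boundary terms produced by integration by parts on consecutive subintervals cancel against each other. Start from the candidate $(\vec{\Lambda},\vec{l})$ defined by \eqref{eq:def_adjsol}, where $\vec{\lambda}\in C^1[\ts,\tf]^d$ is the classical adjoint from \eqref{eq:aIVP}. The first observation is that, because $\vec{\Lambda}$ is the indefinite integral of the continuous function $\vec{\lambda}$, the Riemann--Stieltjes integrals appearing in \eqref{eq:exBoptcond_adj} reduce to ordinary Riemann integrals against $\vec{\lambda}(t)\,\td t$, using the identity recalled at the end of the proof of Lemma \ref{lem:exsol_varf_adj}. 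In particular this conversion applies on each individual subinterval $I_n$, since $\vec{\lambda}$ is continuous there.

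Next, I would fix an arbitrary $\vec{w}\in Y[\ts,\tf]^d$ and work on a single subinterval $I_n=(t_n,t_{n+1}]$. On $I_n$ the function $\vec{w}\vert_{I_n}$ lies in $C_b^1(I_n)^d$ and extends by continuity to the closed interval, so classical integration by parts is legitimate and gives
\begin{align}
\int_{I_n}\bigl[\dot{\vec{w}}(t)-\vec{f}_{\vec{y}}(t,\vec{y}(t))\vec{w}(t)\bigr]^\T\vec{\lambda}(t)\,\td t
= \bigl[\vec{\lambda}^\T\vec{w}\bigr]_{t_n}^{t_{n+1}}
- \int_{I_n}\bigl[\dot{\vec{\lambda}}(t)+\vec{f}_{\vec{y}}^\T(t,\vec{y}(t))\vec{\lambda}(t)\bigr]^\T\vec{w}(t)\,\td t.\nonumber
\end{align}
The adjoint ODE \eqref{eq:aIVP_ode} makes the remaining integrand vanish on each $I_n$, so only the boundary evaluations survive.

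Summing these contributions over $n=0,\dots,N-1$, the interior evaluations at each internal node $t_n$ ($1\le n\le N-1$) appear once with a plus and once with a minus sign; they cancel because $\vec{w}$ is continuous on $[\ts,\tf]$ and $\vec{\lambda}$ is continuous on $[\ts,\tf]$. What remains is the telescoped expression $\vec{\lambda}^\T(\tf)\vec{w}(\tf)-\vec{\lambda}^\T(\ts)\vec{w}(\ts)$. Substituting the terminal condition $\vec{\lambda}(\tf)=J^\prime(\vec{y}(\tf))^\T$ from \eqref{eq:aIVP_ic} and the definition $\vec{l}=\vec{\lambda}(\ts)$, and rearranging, yields exactly \eqref{eq:exBoptcond_adj}.

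The main technical point to be careful about is the reduction of the extended Riemann--Stieltjes integral \eqref{eq:extRSint} on each $I_n$ to a Riemann integral in $\td t$, which relies on $\vec{\lambda}$ being continuous on $[\ts,\tf]$ (hence on the closed $[t_n,t_{n+1}]$) so that $\vec{\Lambda}$ is absolutely continuous with density $\vec{\lambda}$; once this is in place, everything reduces to classical integration by parts on each subinterval and a telescoping argument based on the continuity of both $\vec{w}$ and $\vec{\lambda}$ at the internal breakpoints. No new multiplier structure appears, so the existence portion of Theorem \ref{theo:exBoptcond} will follow directly once this lemma is in hand.
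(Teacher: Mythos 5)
Your proposal is correct and follows essentially the same route as the paper: multiply the classical adjoint equation by a test function $\vec{w}\in Y[\ts,\tf]^d$, split into the subintervals $I_n$, integrate by parts on each piece, and use the continuity of $\vec{w}$ and $\vec{\lambda}$ together with the reduction of the Riemann--Stieltjes integral against $\vec{\Lambda}$ to a Riemann integral against $\vec{\lambda}(t)\,\td t$. The only difference is presentational --- you make explicit the telescoping cancellation of the interior boundary terms, which the paper leaves implicit in its one-line "integration by parts of all integrals" step.
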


\begin{proof}
We proceed in the same way as in the proof of Lemma \ref{lem:exsol_varf_adj},
but choose $\vec{w} \in Y[\ts,\tf]^d$ for the multiplication and split the
integral in \eqref{eq:stradjint} using the subintervals $I_n$ (same arguments as
in the proof of Lemma \ref{lem:exsol_extform}). Integration by
parts of all integrals yields the equivalent equation
\begin{align}
-\vec{\lambda}^\T(\ts) \vec{w}(\ts) -\sum_{n=0}^{N-1}
\int_{I_n}\vec{\lambda}^\T(t) \left[
\dot{\vec{w}}(t)-\vec{f}_{\vec{y}}(t,\vec{y}(t))\vec{w}(t)\right]\td t 
+ J^\prime(\vec{y}(\tf)) \vec{w}(\tf)=0. \nonumber
\end{align}
Thus, the choice \eqref{eq:def_adjsol} provides a solution of
\eqref{eq:exBoptcond_adj}.
\end{proof}

\begin{lemma}\label{lem:usol_extvarf_adj}
For the solution $\vec{y}(t)$ of
\eqref{eq:exBoptcond_nom}-\eqref{eq:exBoptcond_ic}, the corresponding adjoint
solution $(\vec{\Lambda},\vec{l}) \in \NBV[\ts,\tf]^d \times \real^d$ of
\eqref{eq:exBoptcond_adj} is unique.
\end{lemma}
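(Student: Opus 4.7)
The plan is to reduce the uniqueness statement on the larger test space $Y[\ts,\tf]^d$ to the uniqueness already established on $C^1[\ts,\tf]^d$ in Lemma~\ref{lem:usol_varf_adj}. Since $C^1[\ts,\tf]^d \subset Y[\ts,\tf]^d$, any solution $(\vec{\Lambda},\vec{l}) \in \NBV[\ts,\tf]^d \times \real^d$ of \eqref{eq:exBoptcond_adj} must in particular satisfy the identity for every test function $\vec{w} \in C^1[\ts,\tf]^d$, so restricting the class of admissible variations can only retain, not add, solutions.

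The one thing to verify in this restriction is that the sum of subinterval Riemann--Stieltjes integrals appearing in \eqref{eq:exBoptcond_adj} collapses to a single integral over $[\ts,\tf]$ when $\vec{w}\in C^1[\ts,\tf]^d$. For such $\vec{w}$, the integrand $\vec{g}(t):=\dot{\vec{w}}(t)-\vec{f}_{\vec{y}}(t,\vec{y}(t))\vec{w}(t)$ is continuous on the whole interval, so by the additivity property of the Riemann--Stieltjes integral together with the definition \eqref{eq:extRSint} (exactly as used in the proof of Lemma~\ref{lem:exsol_extform}), we have
\begin{align*}
\sum_{n=0}^{N-1} \int_{I_n} \vec{g}(t)\,\td\vec{\Lambda}(t) = \int_\ts^\tf \vec{g}(t)\,\td\vec{\Lambda}(t).
\end{align*}
Hence \eqref{eq:exBoptcond_adj}, restricted to $\vec{w}\in C^1[\ts,\tf]^d$, is literally \eqref{eq:Boptcond_adj}. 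By Lemma~\ref{lem:usol_varf_adj}, the pair $(\vec{\Lambda},\vec{l})$ solving that equation is unique, which yields the uniqueness claim.

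There is no real obstacle here: the nontrivial work, namely constructing a function $\vec{w}_1\in C^1[\ts,\tf]^d$ satisfying the linearized ODE with arbitrary initial datum to conclude $\vec{l}=\vec{0}$, and then invoking the Riesz representation uniqueness to conclude $\vec{\Lambda}=\vec{0}$, has already been carried out in Lemma~\ref{lem:usol_varf_adj}, and enlarging the test space is, for the purpose of uniqueness, trivially harmless.
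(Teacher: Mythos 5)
Your proposal is correct and follows essentially the same route as the paper's proof: restrict the test functions to $C^1[\ts,\tf]^d\subset Y[\ts,\tf]^d$, use the additivity of the extended Riemann--Stieltjes integral for continuous integrands to collapse the sum over subintervals into a single integral, and then invoke Lemma~\ref{lem:usol_varf_adj}. The only cosmetic difference is that the paper phrases the reduction in terms of the nullspace of the linear operator $\hat{\op{A}}$ (i.e.\ the homogeneous equation), whereas you argue directly with the inhomogeneous equation; by linearity these are equivalent.
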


\begin{proof} 
We follow mainly the proof of Lemma \ref{lem:usol_varf_adj}. Equation
\eqref{eq:exBoptcond_adj} is equivalent to
\begin{align}
\underbrace{\sum_{n=0}^{N-1} \int_{I_n}
\dot{\vec{w}}(t)-\vec{f}_{\vec{y}}(t,\vec{y}(t))\vec{w
}(t) \, d\vec{\Lambda}(t)+
\vec{l}^\T \vec{w}(\ts)}_{=:\hat{\op{A}}(\vec{\Lambda},\vec{l})(\vec{w})} =
\underbrace{J^\prime(\vec{y}(\tf)) \vec{w}(\tf)}_{=B(\vec{w})} \; \forall
\vec{w}\in Y[\ts,\tf]^d \nonumber
\end{align}
where $\hat{\op{A}}(\vec{\Lambda},\vec{l})$ is also a linear functional on
$Y[\ts,\tf]^d$ and $\hat{\op{A}}:\NBV[\ts,\tf]^d \times \real^d \rightarrow
\left( Y[\ts,\tf]^d\right)^\prime$ is linear in $(\vec{\Lambda},\vec{l})$. We
show again that $\mN(\hat{\op{A}}) = \left\lbrace
(\vec{0},\vec{0})\right\rbrace$. Since $C^1[\ts,\tf]^d \subset Y[\ts,\tf]^d$,
$\vec{l}$ has to vanish due to the same arguments as used in the proof of Lemma
\ref{lem:usol_varf_adj}. Thus, the following equation
\begin{align} \label{eq:extd_ker_Psi}
\hat{\op{A}}(\vec{\Lambda},\vec{0})(\vec{w})=\sum_{n=0}^{N-1} \int_{I_n}
\dot{\vec{w}}(t)-\vec{f}_{\vec{y}}(t,\vec{y}(t))\vec{w
}(t) \, \td\vec{\Lambda}(t)
=0  \quad \forall \vec{w}\in Y[\ts,\tf]^d \nonumber
\end{align}
has to be satisfied also for $\vec{w}\in C^1[\ts,\tf]^d \subset Y[\ts,\tf]^d$,
i.e. with
$\vec{g}(t):=\dot{\vec{w}}(t)-\vec{f}_{\vec{y}}(t,\vec{y}(t))\vec{w}(t)$
it becomes
\begin{align}
\sum_{n=0}^{N-1} \int_{I_n} \vec{g}(t) \, \td\vec{\Lambda}(t) =0  \quad \forall
\vec{g}\in C^0[\ts,\tf]^d. \nonumber
\end{align}
Furthermore, as $\vec{g}(t)$ is continuous the integral $\int_\ts^\tf
\vec{g}(t)\td \vec{\Lambda}(t)$ exists and coincides with the sum of the
integrals over the subintervals (same arguments as
in the proof of Lemma \ref{lem:exsol_extform})
and the proof
can be finished in the same way as that of Lemma \ref{lem:usol_varf_adj}. 
\end{proof}

With all this at hand we are able to prove Theorem \ref{theo:exBoptcond}.

\begin{proof}[of Theorem \ref{theo:exBoptcond}]
Lemma \ref{lem:exsol_extform} and \ref{lem:usol_extform} prove the existence of
a unique solution of \eqref{eq:exBoptcond_nom}-\eqref{eq:exBoptcond_ic}
coinciding with the solution of \eqref{eq:Boptcond_nom}-\eqref{eq:Boptcond_ic}.
For this solution, equation \eqref{eq:exBoptcond_adj} has a unique solution
given by \eqref{eq:def_adjsol} due to Lemma \ref{lem:exsol_extvarf_adj}  and
\ref{lem:usol_extvarf_adj}.
\end{proof}

\section{Petrov-Galerkin discretization of the extended optimality conditions}
\label{sec:PGappr}

In order to solve the infinite-dimensional optimality conditions
\eqref{eq:exBoptcond} numerically, the infinite-dimensional function spaces have
to be approximated by finite-dimensional subspaces, the finite element
spaces. This so-called \textit{Petrov-Galerkin approximation} transfers the
infinite-dimensional conditions into a finite-dimensional system of
equations which can be solved on a computer. The first part of the section
focuses on the finite-dimensional subspace, and the second part is devoted to
the resulting system of equations.

\subsection{Finite element spaces}
\label{sec:FEspaces}

This section deals with the discretization of the function spaces
$Y[\ts,\tf]^d$ and $\NBV[\ts,\tf]^d$ by choosing appropriate sets of basis
functions.

\paragraph{Trial space}
To discretize the trial space $Y[\ts,\tf]^d$ we use piecewise polynomials of
order $k_n$ on the subinterval $I_n$
\begin{align}
Y_{\mP}[\ts,\tf]^d:=\left\lbrace \vec{y} \in C^0[\ts,\tf]^d:
\vec{y}\big\vert_{I_n} \in \mP^{(k_n)}(I_n)^d \right\rbrace.
\end{align}
We choose local basis functions $\phi_n$ that are composed of the fundamental
Lagrangian polynomials \eqref{eq:Lagpol} restricted to the particular
subinterval. Figure \ref{fig:basisfct} shows the basis function $\phi_n \in
Y_{\mP}[\ts,\tf]^d$ with $k_0=1$, $k_n=2$ for $n>0$ and $h_n=h$ for all $n$. The
support of a single basis function depends on the orders and contains at most
seven adjacent subintervals as BDF methods are stable up to order $6$.

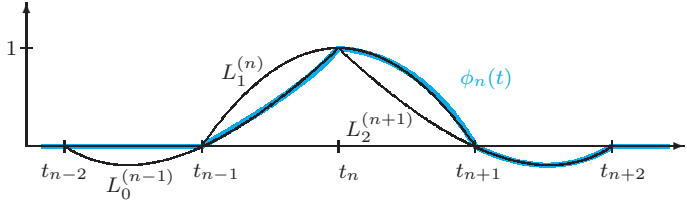
\begin{figure}
\centering
\setlength{\unitlength}{0.5mm}
\begin{picture}(173,52)(-9,3)

\linethickness{.5mm}
{\color{cyan}
\put(36,16){\line(-2,0){42} }
\bezier{300}(36,16)(63,31)(72,42)
\bezier{300}(72,42)(94,40)(108,16)
\bezier{300}(108,16)(128,6)(144,16)
\put(144,16){\line(2,0){15} }
\put(110,34){\makebox(0,0){\sf \begin{small} $\phi_n(t)$\end{small}}}
}
\thinlines

\put(-10,16){\vector(0,1){38}}
\put(-10,16){\vector(1,0){173}}

\put(0,14.5){\line(0,2){3} } 
\put(0,9){\makebox(0,0){\sf \begin{small} $t_{n-2}$ \end{small}}}
\put(20,6){\makebox(0,0){\sf \begin{small} $L_0^{(n-1)}$ \end{small}}}
\put(36,14.5){\line(0,2){3} } 
\put(39,9){\makebox(0,0){\sf \begin{small} $t_{n-1}$\end{small}}}
\put(47,36){\makebox(0,0){\sf \begin{small} $L_1^{(n)}$ \end{small}}}
\put(72,14.5){\line(0,2){3} } 
\put(74,9){\makebox(0,0){\sf \begin{small} $t_{n}$\end{small}}}
\put(83,21){\makebox(0,0){\sf \begin{small} $L_2^{(n+1)}$ \end{small}}}
\put(108,14.5){\line(0,2){3} } 
\put(108,9){\makebox(0,0){\sf \begin{small} $t_{n+1}$\end{small}}}
\put(144,14.5){\line(0,2){3} } 
\put(146,9){\makebox(0,0){\sf \begin{small} $t_{n+2}$\end{small}}}

\put(-11.5,42){\line(2,0){3} } \put(-14,42){\makebox(0,0){\sf \begin{small}
$1$ \end{small}}}

\bezier{300}(0,16)(26,-1.5)(72,42)
\bezier{300}(36,16)(72,68)(108,16)
\bezier{300}(72,42)(118,-1.5)(144,16)
\end{picture}

\caption{Basis function $\phi_n$ of $Y_{\mP}[\ts,\tf]^d$ with $k_0=1$, $k_n=2$
for $n>0$ and constant stepsizes $h_n=h$ for all $n$.}
\label{fig:basisfct}
\end{figure}

The solution $\vec{y} \in Y[\ts,\tf]^d$ is then approximated by
\begin{align}
\vec{y}(t)\approx \vec{y}^h(t):= \vec{y}_\s\phi_0(t)+ \sum_{n=1}^N \vec{y}_n
\phi_n(t)\nonumber 
\end{align}
which results in $N\cdot d$ degrees of freedom $\{\vec{y}_n\in
\real^d\}_{n=1}^N$, since the initial value $\vec{y}_0=\vec{y}_\s$ is
already fixed. To achieve locally the order $k_n>1$, former values
$\vec{y}_{n+1-k_n},\dots,\vec{y}_n$ are reused to set up the
interpolation polynomial of order $k_n$ which is afterwards restricted to
$I_n$.

\paragraph{Test space}

We approximate the test space $\NBV[\ts,\tf]^d$ using Heaviside functions
as basis functions. We choose them to be continuous from the right with
discontinuity in $t_n$. Thus, a function $\vec{\Lambda} \in \NBV[\ts,\tf]^d$ is
approximated by the linear combination of these basis functions in the form
\begin{align}\label{eq:La_h}
\vec{\Lambda}(t) \approx \vec{\Lambda}^h(t) := \sum_{n=1}^N
h_{n-1}\vec{\lambda}_n H_n(t)
\end{align}
where the $h_{n-1}$ appear for reasons which will become clear later. Note
that $\vec{\Lambda}^h$ is a step function with initial value
$\vec{\Lambda}^h(\ts)=\vec{0}$ and jumps of magnitude $h_{n-1}\vec{\lambda}_n$
at $t_n$ for $n=1,\dots,N$. Thus, it is $\vec{\Lambda}^h(t_n)=
\vec{\Lambda}^h(t_{n-1})+h_{n-1}\vec{\lambda}_n$ at the time points
and $\vec{\Lambda}^h(t)=\vec{\Lambda}^h(t_n)$ for inner points $t\in
(t_n,t_{n+1})$. We denote this space by $Z_H[\ts,\tf]^d$.

Regarding the relation \eqref{eq:def_adjsol} between the adjoint solutions
$\vec{\lambda}$ and $\vec{\Lambda}$, the classical derivative of
$\vec{\Lambda}^h$ fails to exist. But $\vec{\Lambda}^h$ is still differentiable
in a weak form such that its weak derivative is given by the Dirac measures at
$\{t_1,\dots,t_N\}$ with heights $\{h_0
\vec{\lambda}_1,\dots,h_{N-1}\vec{\lambda}_N\}$, see e.g.
\cite[Sec. 4.24]{Alt2002}.

\subsection{Finite-dimensional optimality conditions}
\label{sec:fdim_optcond}

In this section, we approximate the infinite-dimensional optimality conditions
\eqref{eq:exBoptcond} by finite-dimensional equations that result from
approximating the function spaces by the finite element spaces of Section
\ref{sec:FEspaces}. The resulting system of equations will be discussed in the
following.

\begin{theorem}\label{theo:disBoptcond}
The discretized optimality conditions, i.e.
\bse\label{eq:disBoptcond}
\begin{align}
J^\prime(\vec{y}^h(\tf)) \vec{w}^h(\tf) \hspace{7cm} \nonumber\\
 - \sum_{n=0}^{N-1}
\int_{I_n}\dot{\vec{w}}^h(t)-\vec{f}_{\vec{y}}(t,\vec{y}^h(t))\vec{w}^h(t) \,
\td\vec{\Lambda}^h(t) - [\vec{l}^h]^\T \vec{w}^h(\ts)
&=0,\label{eq:disBoptcond_adj}\\
-\sum_{n=0}^{N-1}
\int_{I_n} \dot{\vec{y}}^h(t)-\vec{f}(t,\vec{y}^h(t)) \,
\td \vec{\Gamma}^h(t) &=0,\label{eq:disBoptcond_nom} \\
- [\vec{r}^h]^\T \left[\vec{y}^h(\ts)-\vec{y}_\s
\right]&=0,\label{eq:disBoptcond_ic}\\
\forall (\vec{w}^h,\vec{\Gamma}^h,\vec{r}^h) \in Y_\mP[\ts,\tf]^d \times
Z_H[\ts,\tf]^d \times \real^d, \nonumber
\end{align}
\ese
are equivalent to the BDF scheme \eqref{eq:BDF} with prescribed stepsizes and
orders together with its discrete adjoint scheme \eqref{eq:aBDF}.
\end{theorem}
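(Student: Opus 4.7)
The plan is to exploit the linearity of \eqref{eq:disBoptcond} in the test functions, so that it suffices to verify the three discretized equations for each basis element of $\real^d$, $Z_H[\ts,\tf]^d$, and $Y_\mP[\ts,\tf]^d$ separately, and then read off the corresponding BDF or adjoint BDF relation. Testing \eqref{eq:disBoptcond_ic} against the canonical basis of $\real^d$ gives $\vec{y}^h(\ts)=\vec{y}_\s$, which by nodality of $\phi_0$ means $\vec{y}_0=\vec{y}_\s$, the BDF initialization \eqref{eq:BDF_ini}.

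The crucial technical observation for the remaining equations is that the extended Stieltjes integral collapses when the measure is a step function. Since $\vec{\Gamma}^h$ (resp.\ $\vec{\Lambda}^h$) has a single jump at $t_{n+1}$ inside $I_n=(t_n,t_{n+1}]$, definition \eqref{eq:extRSint} yields
\begin{align}
\int_{I_n} \vec{g}(t)\,\td\vec{\Lambda}^h(t) = h_n\, \vec{g}(t_{n+1})^\T\vec{\lambda}_{n+1}, \nonumber
\end{align}
and analogously for $\vec{\Gamma}^h$. Combined with the fact that $\vec{y}^h\big\vert_{I_n}$ is the Lagrangian interpolant $\sum_{i=0}^{k_n}L_i^{(n)}(t)\vec{y}_{n+1-i}$, the definition \eqref{eq:Lagpol} of the BDF coefficients gives
\begin{align}
h_n\dot{\vec{y}}^h(t_{n+1}) = \sum_{i=0}^{k_n}\al{i}{n}\vec{y}_{n+1-i}, \nonumber
\end{align}
and similarly $h_n\dot{\vec{w}}^h(t_{n+1})=\sum_{i=0}^{k_n}\al{i}{n}\vec{w}_{n+1-i}$. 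Inserting this into \eqref{eq:disBoptcond_nom} tested against each basis step function $h_n H_{n+1}$ immediately yields \eqref{eq:BDF_main}.

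For the adjoint equation \eqref{eq:disBoptcond_adj} I would test against each $\phi_m$ for $0\le m\le N$. Using nodality $\phi_m(t_j)=\delta_{mj}$ and collapsing the Stieltjes integral as above, the sum over $n$ collects all pairs $(n,i)$ satisfying $n+1-i=m$, $0\le i\le k_n$. After reindexing $j=n+1-m$ and extending sums by the convention $\al{i}{n}=0$ for $i>k_n$, for $1\le m\le N-1$ the equation reads
\begin{align}
\sum_{j=0}^{\min(k_{\max},N-m)}\al{j}{m+j-1}\vec{\lambda}_{m+j} = h_{m-1}\vec{f}^\T_{\vec{y}}(t_m,\vec{y}_m)\vec{\lambda}_m, \nonumber
\end{align}
which after the substitution $n\leftrightarrow m-1$ is exactly \eqref{eq:aBDF_main}. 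The boundary case $m=N$ couples the integral contribution on $I_{N-1}$ with the term $J^\prime(\vec{y}_N)\vec{w}_N$ from \eqref{eq:disBoptcond_adj} to give the initialization \eqref{eq:aBDF_ini}, while $m=0$ couples with $-[\vec{l}^h]^\T\vec{w}_0$ and merely defines the discrete Lagrange multiplier $\vec{l}^h$, which does not feed back into the BDF/adjoint-BDF recursion. All steps are reversible, so the equivalence goes both ways.

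The main obstacle will be the index bookkeeping in the adjoint case: the support of $\phi_m$ overlaps up to $k_{\max}$ neighboring subintervals, and the double sum $\sum_n\sum_i\al{i}{n}\vec{\lambda}_{n+1}$ must be reordered into the form $\sum_i\al{i}{n+i}\vec{\lambda}_{n+1+i}$ of \eqref{eq:aBDF_main} while correctly absorbing the boundary contributions at $m=0$ and $m=N$. The scaling factor $h_{n-1}$ built into the ansatz \eqref{eq:La_h} is precisely what cancels the $h_n^{-1}$ hidden in $\dot L_i^{(n)}(t_{n+1})$, so that the same BDF coefficients $\al{i}{n}$ appear cleanly on both the primal and the dual side.
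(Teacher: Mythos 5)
Your proposal is correct and follows essentially the same route as the paper: collapse the extended Riemann--Stieltjes integral on the step-function measures to point evaluations at $t_{n+1}$ weighted by the jumps $h_n\vec{\lambda}_{n+1}$, identify $h_n\dot{\phi}_{n+1-i}(t_{n+1})=\al{i}{n}$ from the Lagrange basis, and recover \eqref{eq:BDF} from \eqref{eq:disBoptcond_nom} and \eqref{eq:aBDF} from \eqref{eq:disBoptcond_adj} by varying the test coefficients. The only difference is presentational -- you test against individual basis functions and reindex the double sum explicitly, whereas the paper packages the same bookkeeping into the matrix form $\mat{A}\otimes\mat{I}$ and reads the adjoint scheme off its transpose.
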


The above theorem is the main result of this section. The proof follows directly
from the two lemmas given below.

\begin{lemma} \label{lem:equiBDF}
The equations \eqref{eq:disBoptcond_nom}-\eqref{eq:disBoptcond_ic} are
equivalent to the BDF scheme \eqref{eq:BDF} with prescribed stepsizes and
orders.
\end{lemma}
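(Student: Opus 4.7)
The plan is to expand each test function in its finite element basis, compute the resulting Riemann-Stieltjes integrals explicitly, and match the algebraic equations obtained to the two parts \eqref{eq:BDF_ini}-\eqref{eq:BDF_main} of the BDF scheme.

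First I would dispose of the initial condition. Varying $\vec{r}^h \in \real^d$ in \eqref{eq:disBoptcond_ic} forces $\vec{y}^h(\ts) = \vec{y}_\s$. Since by construction of $Y_\mP[\ts,\tf]^d$ it holds $\phi_0(\ts)=1$ and $\phi_n(\ts)=0$ for $n\ge 1$, this is nothing but $\vec{y}_0 = \vec{y}_\s$, i.e.\ \eqref{eq:BDF_ini}.

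Next I would treat \eqref{eq:disBoptcond_nom} by expanding $\vec{\Gamma}^h(t)=\sum_{n=1}^N h_{n-1}\vec{\gamma}_n H_n(t)\in Z_H[\ts,\tf]^d$. The key observation is that on each subinterval $I_n=(t_n,t_{n+1}]$ the step function $\vec{\Gamma}^h$ is constant except for the single jump of height $h_n\vec{\gamma}_{n+1}$ at the right endpoint $t_{n+1}$, contributed by $H_{n+1}$. Applying the extended Riemann-Stieltjes integral \eqref{eq:extRSint} with a partition of $I_n$ whose rightmost node is $t_{n+1}$, every increment $\vec{\Gamma}^h(\tau_k)-\vec{\Gamma}^h(\tau_{k-1})$ vanishes except for the last one, so that
\begin{align*}
\int_{I_n}\bigl[\dot{\vec{y}}^h(t)-\vec{f}(t,\vec{y}^h(t))\bigr]\td\vec{\Gamma}^h(t)
=\bigl[\dot{\vec{y}}^h(t_{n+1})-\vec{f}(t_{n+1},\vec{y}^h(t_{n+1}))\bigr]\,h_n\vec{\gamma}_{n+1}.
\end{align*}
Summing over $n$ and using that the vectors $\vec{\gamma}_{n+1}\in\real^d$ can be varied independently then yields, for each $n=0,\dots,N-1$, the collocation identity $h_n[\dot{\vec{y}}^h(t_{n+1})-\vec{f}(t_{n+1},\vec{y}^h(t_{n+1}))]=\vec{0}$.

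Finally I would translate this collocation condition into the BDF scheme. By the definition of $Y_\mP[\ts,\tf]^d$, the restriction of $\vec{y}^h$ to $I_n$ is the Lagrange interpolation polynomial $\sum_{i=0}^{k_n}L_i^{(n)}(t)\vec{y}_{n+1-i}$, so $\vec{y}^h(t_{n+1})=\vec{y}_{n+1}$ and, by \eqref{eq:Lagpol}, $h_n\dot{\vec{y}}^h(t_{n+1})=\sum_{i=0}^{k_n}\al{i}{n}\vec{y}_{n+1-i}$. Substitution recovers exactly \eqref{eq:BDF_main}. Since every step of the argument is an equivalence between the free data $\{\vec{y}_n,\vec{\gamma}_n,\vec{r}^h\}$ and the corresponding algebraic equations, the converse implication is automatic. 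The only subtle point — and the one I would document explicitly — is the convention that $I_n$ is half-open on the left, which is what makes the Riemann-Stieltjes integral over $I_n$ attribute the jump of $\vec{\Gamma}^h$ at $t_{n+1}$ to $I_n$ (and not the jump at $t_n$ to both $I_{n-1}$ and $I_n$), so that the collocation points line up correctly with the BDF updates.
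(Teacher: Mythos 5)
Your proposal is correct and follows essentially the same route as the paper: evaluate the extended Riemann--Stieltjes integral on each $I_n$ to pick out the single jump $h_n\vec{\gamma}_{n+1}$ at $t_{n+1}$, vary the coefficients independently to obtain the collocation identity at $t_{n+1}$, and use $\al{i}{n}=h_n\dot{L}_i^{(n)}(t_{n+1})$ to recover \eqref{eq:BDF_main}, with \eqref{eq:disBoptcond_ic} giving \eqref{eq:BDF_ini}. The only cosmetic difference is that the paper assembles the resulting equations into a lower-triangular Kronecker-product matrix form before concluding, whereas you argue addend by addend; the content is identical.
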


\begin{proof}
We first consider one addend of \eqref{eq:disBoptcond_nom}
\label{solveint}
\begin{align}
\int_{I_n} &\dot{\vec{y}}^h(t)-\vec{f}(t,\vec{y}^h(t)) \,
\td\vec{\Gamma}^h(t)\nonumber\\
=& \left[ \vec{\Gamma}^h(t_{n+1}) - \vec{\Gamma}^h(t_n)\right]^\T
\left\lbrace \dot{\vec{y}}^h(t_{n+1})
-\vec{f}(t_{n+1},\vec{y}^h(t_{n+1}))
\right\rbrace \nonumber\\
=& \vec{\gamma}^\T_{n+1} \Bigg\lbrace \sum _{i=0}^{k_n}
\underbrace{h_n\dot{\phi}_{n+1-i}(t_{n+1}) }_{= \al{i}{n}}\vec{y}_{n+1-i}-h_n
\vec{f}(t_{n+1},\vec{y}_{n+1})\Bigg\rbrace 
\nonumber
\end{align}
where the first equality holds due to the extended
Riemann-Stieltjes integral \eqref{eq:extRSint} in vector-valued version
with coefficients $h_n\vec{\gamma}_{n+1}$ of $\vec{\Gamma}^h$ in
\eqref{eq:La_h}. The second equality uses the properties of the basis functions
$\phi_n$. Here the appearance of the $h_n$ in the coefficients of
$\vec{\Lambda}^h$ given by \eqref{eq:La_h} becomes clear. Thus,
\eqref{eq:disBoptcond_nom} can be written as a system of
equations that is nonlinear in $\{\vec{y}_n\}_{n=1}^N$ and linear in 
$\vec{\gamma}^\T:=\left[\begin{array}{cccc} \vec{\gamma}^\T_1 &
\vec{\gamma}^\T_2 & \cdots & \vec{\gamma}^\T_N \end{array}\right]\in \left(
\real^d\right)^N$
\begin{align}\label{eq:dvarf}
\vec{\gamma}^\T \left[ \left( \mat{A} \otimes \mat{I}\right)  \left(
\begin{array}{c} \vec{y}_1 \\ \vec{y}_2\\ \vdots \\ \vec{y}_N
\end{array}\right)+ 
\left( \begin{array}{c} \al{1}{0} \vec{y}_\s \\ 0\\ \vdots \\ 0
\end{array}\right) -
\left( \begin{array}{c} h_0 \vec{f}(t_1,\vec{y}_1) \\ h_1
\vec{f}(t_2,\vec{y}_2)\\ \vdots \\ h_{N-1}
\vec{f}(t_N,\vec{y}_N) \end{array}\right)\right] = 0,\; \forall
\vec{\gamma}
\end{align}
where $\mat{A}\otimes \mat{I}$ denotes the Kronecker tensor product, i.e. the
$(N\cdot d)\times( N\cdot d)$ matrix with $d\times d$ blocks $a_{ij}\mat{I}$,
and the quadratic matrix $\mat{A}$ is lower triangular with band structure
\label{p:matA}
\begin{align}
\mat{A} = \left( \begin{array}{ccccc} \al{0}{0} & 0 & 0& 0& \cdots \\ \al{1}{1}
& \al{0}{1} & 0& 0& \cdots \\ \vdots \\ \cdots & 0&\al{k_{N-1}}{N-1} &\cdots
&\al{0}{N-1}\end{array}\right).\nonumber
\end{align}
Equation \eqref{eq:dvarf} holds if and only if the term in the squared brackets
vanishes. Since $\mat{A}$ is lower triangular, each $\vec{y}_{n+1}$ is
determined
directly from $\vec{y}_\s,\vec{y}_1,\dots,\vec{y}_n$ by the $n$th equation of
the squared brackets term in \eqref{eq:dvarf} which coincides with the $n$th
step of \eqref{eq:BDF_main}. So, together with the equivalence between
\eqref{eq:BDF_ini} and \eqref{eq:disBoptcond_ic} the lemma is shown.
\end{proof}

\begin{lemma}\label{lem:equiaBDF}
For the solution $\vec{y}^h(t)$ of
\eqref{eq:disBoptcond_nom}-\eqref{eq:disBoptcond_ic}, the equation
\eqref{eq:disBoptcond_adj} is equivalent to the discrete adjoint scheme
\eqref{eq:aBDF} of the nominal BDF scheme.
\end{lemma}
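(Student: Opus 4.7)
The plan is to substitute the step-function structure of $\vec{\Lambda}^h$ from \eqref{eq:La_h} into the Riemann-Stieltjes integrals in \eqref{eq:disBoptcond_adj}, collect the coefficients of the independent nodal values $\vec{w}_m:=\vec{w}^h(t_m)\in\real^d$ that parametrize $Y_\mP[\ts,\tf]^d$, and observe that, because \eqref{eq:disBoptcond_adj} must hold for every $\vec{w}^h$, these coefficients must vanish and reproduce the discrete adjoint scheme \eqref{eq:aBDF}. Since $\vec{\Lambda}^h$ is piecewise constant on the interior of each $I_n=(t_n,t_{n+1}]$ with a single jump of magnitude $h_n\vec{\lambda}_{n+1}$ at $t_{n+1}$, the extended Riemann-Stieltjes integral over $I_n$ collapses, as in the proof of Lemma~\ref{lem:equiBDF}, to the integrand evaluated at $t_{n+1}$ times that jump. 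Using the Lagrangian basis identity $h_n\dot{\vec{w}}^h(t_{n+1}) = \sum_{i=0}^{k_n}\al{i}{n}\vec{w}_{n+1-i}$ from \eqref{eq:Lagpol}, equation \eqref{eq:disBoptcond_adj} becomes a linear combination of $\vec{w}_0,\vec{w}_1,\dots,\vec{w}_N$ with coefficients built from $J'(\vec{y}_N)$, the discrete adjoints $\vec{\lambda}_n$, the BDF coefficients $\al{i}{n}$, and the Jacobians $\vec{f}_{\vec{y}}(t_{n+1},\vec{y}_{n+1})$.

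Setting each coefficient to zero then matches term-by-term with \eqref{eq:aBDF}. The $\vec{w}_N$ equation combines the boundary term $J'(\vec{y}_N)$, the single contribution from $(n,i)=(N-1,0)$ in the double sum, and the correction $h_{N-1}\vec{f}_{\vec{y}}^\T(t_N,\vec{y}_N)\vec{\lambda}_N$; after transposition this reproduces the initialization \eqref{eq:aBDF_ini}. The $\vec{w}_m$ equations for $1\leq m\leq N-1$ reproduce the main scheme \eqref{eq:aBDF_main} after reindexing (see below). The coefficient of $\vec{w}_0$ merely determines the vector Lagrange multiplier $\vec{l}^h\in\real^d$ in terms of the computed $\vec{\lambda}_n$; this equation is extraneous to \eqref{eq:aBDF} and should be read as the definition of $\vec{l}^h$.

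The main bookkeeping difficulty lies in the $\vec{w}_m$ equations for $1\leq m\leq N-1$: when the double sum $\sum_{n=0}^{N-1}\sum_{i=0}^{k_n}\al{i}{n}\vec{\lambda}_{n+1}^\T\vec{w}_{n+1-i}$ is reordered by the index $m$ of $\vec{w}_m$, the substitution $n\mapsto m-1+i$ shifts the superscript of $\al{i}{\cdot}$ from $n$ to $m-1+i$, exactly matching the shifted index $\al{i}{n+i}$ that appears in \eqref{eq:aBDF_main} upon the further relabeling $n=m-1$. The convention $\al{i}{n}=0$ for $i>k_n$ keeps the summation ranges consistent across varying local orders, and the identification with \eqref{eq:aBDF_main} is then exact.
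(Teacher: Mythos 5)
Your proposal is correct and follows essentially the same route as the paper: each Riemann--Stieltjes integral over $I_n$ collapses to the jump $h_n\vec{\lambda}_{n+1}$ times the integrand at $t_{n+1}$, the identity $h_n\dot{\vec{w}}^h(t_{n+1})=\sum_{i=0}^{k_n}\al{i}{n}\vec{w}_{n+1-i}$ is applied, and the resulting bilinear form is transposed to read off the backward recursion -- the paper packages this as the matrix identity \eqref{eq:dvarf_adj} with $\mat{A}\otimes\mat{I}$, whereas you carry out the equivalent coefficient collection for each $\vec{w}_m$ explicitly, including the index shift $n\mapsto m-1+i$ that produces the superscripts $\al{i}{n+i}$ and the reading of the $\vec{w}_0$ equation as the definition of $\vec{l}^h$.
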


\begin{proof}
Analogously to the beginning of the proof of Lemma \ref{lem:equiBDF},
each integral in \eqref{eq:disBoptcond_adj} is given by
\begin{align}
\int_{I_n} \dot{\vec{w}}^h(t)-&\vec{f}_{\vec{y}}(t,\vec{y}^h(t))\vec{w}^h(t) \,
\td \vec{\Lambda}^h(t) \nonumber\\
&= \vec{\lambda}_{n+1}^\T \left\lbrace \sum _{i=0}^{k_n}
\al{i}{n}\vec{w}_{n+1-i}-h_n
\vec{f}_{\vec{y}}(t_{n+1},\vec{y}_{n+1})\vec{w}_{n+1}\right\rbrace . \nonumber
\end{align}
Thus, equation \eqref{eq:disBoptcond_adj} can be formulated equivalently in
matrix form with $\vec{w}^\T := \left[\begin{array}{cccc} \vec{w}_1^\T &
\vec{w}_2^\T & \cdots & \vec{w}_N^\T \end{array}\right] \in \left(
\real^d\right)^N$
\begin{align} \label{eq:dvarf_adj}
&\left[\begin{array}{cccc} \vec{0} & \cdots & \vec{0} & J^\prime(\vec{y}_N)
\end{array}\right] \vec{w}-(\al{1}{0} \vec{\lambda}_1-\vec{l})^\T \vec{w}_0
\nonumber\\
&-\vec{\lambda}^\T \left[\mat{A} \otimes \mat{I}-
\left( \begin{array}{ccc} h_0 \vec{f}_{\vec{y}}(t_1,\vec{y}_1) & & \mat{0}\\
 & \ddots & \\ \mat{0} & &h_{N-1} \vec{f}_{\vec{y}}(t_N,\vec{y}_N)
\end{array} \right) \right]  \vec{w} =0, \; \forall \vec{w}_0,\vec{w}
\end{align}
which is linear in both the variations $\vec{w}_0,\vec{w}$ and the unknown
$\vec{\lambda}$. The equivalent time-stepping scheme goes backwards in time
starting with $J^\prime(\vec{y}_N) - \al{0}{N-1} \vec{\lambda}_N^\T +
h_{N-1}\vec{\lambda}_N^\T \vec{f}_{\vec{y}}(t_N,\vec{y}_N) =0$. Thus,
\eqref{eq:dvarf_adj} is equivalent
to \eqref{eq:aBDF} which finishes the proof.
\end{proof}

The necessary conditions for the well-posedness of
\eqref{eq:disBoptcond_nom}-\eqref{eq:disBoptcond_ic} are stated in numerous
textbooks on BDF methods, for example in \cite[Ch.4\S3]{Shampine1994}. With the
Lipschitz constant $L$ of $\vec{f}(t,\vec{y})$, the sequence of
stepsizes and orders has to satisfy $\big\vert h_n / \al{0}{n}\; L \big\vert <1$
in order to provide a unique solution $\vec{y}^h(t)$ of
\eqref{eq:disBoptcond_nom}-\eqref{eq:disBoptcond_ic}. The solution depends
continuously on the input data due to the stability of the integration scheme.
Since $\vec{f}_{\vec{y}}(t,\vec{y})$ is bounded by $L$ for all $(t,\vec{y})$
and $h_n$, $k_n$ satisfy $\big\vert h_n / \al{0}{n}\; L \big\vert <1$, the
matrix in \eqref{eq:dvarf_adj} is non-singular and thus
\eqref{eq:disBoptcond_adj} possesses a unique weak adjoint solution
$\vec{\Lambda}^h(t)$. The solution depends continuously on the input data
$J^\prime(\vec{y}_N)$ since the stability of the nominal integration scheme is
carried over to the discrete adjoint scheme \cite{Sandu2008}.
The well-posedness of \eqref{eq:disBoptcond_adj} can also be established using
the derivation of the equivalent scheme \eqref{eq:aBDF} by automatic
differentiation of \eqref{eq:BDF}, cf. Section \ref{sec:BDF_aBDF}.

\section{Convergence analysis of classical adjoints and weak adjoints}
\label{sec:conv}

In this section, we focus on the asymptotic behavior of the solutions of the
discrete adjoint scheme \eqref{eq:aBDF}. Therefore, we consider a nominal BDF
method of constant order $k$ with constant stepsizes $h$ using a self-starting
procedure for $\vec{y}_1,\dots,\vec{y}_m$ with $m\geq k-1$ fixed. We will call
this a \textit{non-adaptive BDF method}.
As seen in Section \ref{sec:aBDF}, the main part of the discrete adjoint
scheme, i.e. equation \eqref{eq:aBDF_main} with $n=N-k,\dots,m$, is a consistent
method of order $k$ for a variant of the adjoint equation \eqref{eq:aIVP}.
However, the adjoint initialization and termination steps do not give consistent
approximations. Nevertheless, we will prove that the
approximations in the main part converge linearly to the exact classical
solution $\vec{\lambda}(t)$ of \eqref{eq:aIVP} around the exact nominal solution
$\vec{y}(t)$. Using this result, we then show the strong
convergence of the finite element approximation $\vec{\Lambda}^h(t)$ towards the
solution $\vec{\Lambda}(t)$ of \eqref{eq:Boptcond_adj}, i.e. to the weak
solution of \eqref{eq:aIVP}, in the total variation norm of $\NBV[\ts,\tf]^d$.

\subsection{Convergence of the discrete adjoints to the classical adjoint}
\label{sec:Hconv}

The discrete adjoint scheme \eqref{eq:aBDF} of a non-adaptive BDF scheme reads
\bse\label{eq:nadp_aBDF}
\begin{align}
\alpha_0 \vec{\lambda}_N - J^\prime(\vec{y}_N)^\T &= h
\vec{f}^\T_{\vec{y}}(t_N,\vec{y}_N)\vec{\lambda}_N \label{eq:nadp_aBDF_ini}\\
\sum_{i=0}^{N-1-n} \alpha_i \vec{\lambda}_{n+1+i}
&=  h \vec{f}^\T_{\vec{y}}(t_{n+1},\vec{y}_{n+1})\vec{\lambda}_{n+1},\;
n=N-2,\dots, N-k  \label{eq:nadp_aBDF_start}\\
\sum_{i=0}^k \alpha_i \vec{\lambda}_{n+1+i}
&=  h \vec{f}^\T_{\vec{y}}(t_{n+1},\vec{y}_{n+1})\vec{\lambda}_{n+1},\;
n=N-k-1,\dots, m  \label{eq:nadp_aBDF_main}\\
\sum_{i=0}^k \al{i}{n+i} \vec{\lambda}_{n+1+i}
&=  h \vec{f}^\T_{\vec{y}}(t_{n+1},\vec{y}_{n+1})\vec{\lambda}_{n+1},\;
n=m-1,\dots, 0\label{eq:nadp_aBDF_term}
\end{align}
\ese
where \eqref{eq:nadp_aBDF_term} accounts for the nominal starting procedure.
To investigate the scheme \eqref{eq:nadp_aBDF} purely as an integration method
for the adjoint differential equation \eqref{eq:aIVP}, we consider a
continuously differentiable approximation $\tilde{\vec{y}}(t)$ satisfying
$\tilde{\vec{y}}(t_n)=\vec{y}_n$ for $n=0,\dots,N$, for example a quadratic
spline function interpolating $\left\lbrace \vec{y}_n\right\rbrace_{n=0}^N$ and
$\left\lbrace \vec{f}(t_n,\vec{y}_n)\right\rbrace_{n=0}^N$. With the adjoint
differential equation around $\tilde{\vec{y}}(t)$
\begin{align}\label{eq:app_aIVP}
\dot{\tilde{\vec{\lambda}}}(t)=-\vec{f}^\T_{\vec{y}}\left(t,\tilde{\vec{y}}
(t)\right) \tilde{\vec{\lambda}}(t),\;
\tilde{\vec{\lambda}}(\tf)=J^\prime\left(\tilde{\vec{y}}(\tf)\right)^\T
\end{align}
the main steps \eqref{eq:nadp_aBDF_main} can be seen as a BDF method of order
$k$ applied to \eqref{eq:app_aIVP}. The adjoint initialization steps 
\eqref{eq:nadp_aBDF_ini}-\eqref{eq:nadp_aBDF_start} can be interpreted as a
starting procedure for \eqref{eq:nadp_aBDF_main} giving inconsistent start
values $\vec{\lambda}_N,\dots,\vec{\lambda}_{N-k+1}$.

In the following, we study the asymptotic behavior for decreasing
$h\rightarrow 0$ and a fixed time point $t^*$ which belongs to
refining grids, i.e. for every stepsize $h$ there exists an $n=n(h)$ such that
$t^*=t_n$. The interval $[t_{m+1},t_{N-k}]$ of the main part of
\eqref{eq:nadp_aBDF} increases and approaches $(\ts,\tf)$ for $h\rightarrow
0$. By $\norm{\cdot}$ we denote any vector norm in $\real^d$.

\begin{lemma} \label{lem:Appconv}
Let $\vec{f}_{\vec{y}}(t,\tilde{\vec{y}}(t))$ be continuously differentiable
in $t\in[\ts,\tf]$ and $\tilde{\vec{y}}(t_n)=\vec{y}_n$ for $n=0,\dots,N$ where
$\{\vec{y}_n\}_{n=0}^N$ is computed by the non-adaptive BDF method of order $k$
with constant stepsize $h$. Let $\tilde{\vec{\lambda}}(t)$ be the exact solution
of
the adjoint differential equation \eqref{eq:app_aIVP} and let
$\{\vec{\lambda}_n\}_{n=1}^N$ be computed by \eqref{eq:nadp_aBDF}. Then, for a
fixed timepoint $t_n=t\in(\ts,\tf)$ there exists $H>0$ such that
\begin{align}
 \norm{\vec{\lambda}_n-\tilde{\vec{\lambda}}(t_n)}= \mO(h) \nonumber
\end{align}
as the grid is refined with $H>h\rightarrow 0$.
\end{lemma}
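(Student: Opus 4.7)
The plan is to analyse the errors $\vec{e}_n := \vec{\lambda}_n - \tilde{\vec{\lambda}}(t_n)$ by splitting \eqref{eq:nadp_aBDF} into its three blocks and reinterpreting the main part \eqref{eq:nadp_aBDF_main} as a standard BDF-$k$ method applied in the reversed-time variable $s := N-n$ to \eqref{eq:app_aIVP}. Since $\vec{f}_{\vec{y}}(t,\tilde{\vec{y}}(t))$ is $C^1$ in $t$, repeated differentiation of \eqref{eq:app_aIVP} gives $\tilde{\vec{\lambda}}\in C^{k+1}[\ts,\tf]$, so the Taylor expansion of the BDF operator on the exact solution yields local truncation errors $\vec{\tau}_{n+1}=\mathcal{O}(h^{k+1})$ throughout the main region; subtracting the resulting truncated identity from \eqref{eq:nadp_aBDF_main} produces the perturbed linear recurrence
\begin{align}
\sum_{i=0}^{k} \alpha_i \vec{e}_{n+1+i} = h\, \vec{f}_{\vec{y}}^{\,\T}(t_{n+1},\vec{y}_{n+1})\,\vec{e}_{n+1} - \vec{\tau}_{n+1},\qquad n=N-k-1,\dots,m. \nonumber
\end{align}
I would then invoke the convergence theory of zero-stable linear multistep methods with perturbed start values by decomposing the error along the characteristic modes $\mu_1(h),\dots,\mu_k(h)$, where $\mu_1\to 1$ is the consistency root of $\rho(\mu)=\sum_i\alpha_i\mu^{k-i}$ and $|\mu_l(0)|<1$ for $l\geq 2$ by zero-stability of BDF-$k$, $k\leq 6$.

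The key technical point is controlling the amplitude $c_0(h)$ of the consistency mode in the initial data $\vec{e}_N,\dots,\vec{e}_{N-k+1}$ produced by \eqref{eq:nadp_aBDF_ini}--\eqref{eq:nadp_aBDF_start}. Extending by $\vec{\lambda}_{N+1}=\dots=\vec{\lambda}_{N+k-1}=0$, these equations read as a full BDF-$k$ recurrence in $j:=N-n$ with inhomogeneity $J'(\vec{y}_N)^\T$ at $j=0$ and zero elsewhere, so the associated discrete Green's function with generating function $G(z)=1/\sum_i \alpha_i z^i$ gives $\vec{\lambda}_{N-j}|_{h=0} = g_j\,J'(\vec{y}_N)^\T$. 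A residue computation at the simple pole $z=1$ shows $g_j\to 1/\rho'(1)$ as $j\to\infty$, and the BDF consistency identity $\rho'(1)=\sigma(1)=1$ then forces $\vec{\lambda}_{N-j}|_{h=0}\to J'(\vec{y}_N)^\T = \tilde{\vec{\lambda}}(\tf)$; hence the constant-mode coefficient of the initial error vanishes exactly at $h=0$. Turning $h$ back on perturbs the terminal equation by the Jacobian term, each starting equation by $\mathcal{O}(h)$, and $\tilde{\vec{\lambda}}(t_{N-j})-\tilde{\vec{\lambda}}(t_N)=\mathcal{O}(jh)=\mathcal{O}(h)$ for $j<k$, so altogether $c_0(h)=\mathcal{O}(h)$.

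Combining, for any fixed $t_n=t\in(\ts,\tf)$ and $h$ small enough that $t\in[t_{m+1},t_{N-k}]$, the error decomposes into the consistency-mode contribution $c_0(h)\mu_1(h)^{N-n}=\mathcal{O}(h)$ uniformly on the bounded interval, spurious-mode contributions bounded by $\mathcal{O}(1)\cdot\zeta^{N-n}$ with $\zeta:=\max_{l\geq 2}|\mu_l(0)|<1$ decaying faster than any power of $h$ as $N-n=\mathcal{O}(1/h)\to\infty$, and the convolution of the $\mathcal{O}(h^{k+1})$ truncation forcing with the bounded discrete Green's function over $\mathcal{O}(1/h)$ steps, which contributes $\mathcal{O}(h^k)\subseteq\mathcal{O}(h)$. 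For $t\in(\ts,t_{m+1}]$, the finitely many (at most $m$) termination steps \eqref{eq:nadp_aBDF_term} are implicit solves whose coefficient matrices are uniformly invertible under the stepsize-order restriction $|h_n/\al{0}{n}\,L|<1$ of Section~\ref{sec:fdim_optcond}, propagating the $\mathcal{O}(h)$ bound from $\vec{e}_{m+1}$ downward without amplification. The hard step is verifying $c_0(h)=\mathcal{O}(h)$: it rests on the interplay between the truncated-sum structure of the adjoint initialization and the consistency identity $\rho'(1)=1$, and a cleaner alternative closure uses the automatic-differentiation viewpoint of Section~\ref{sec:aBDF} to identify $\vec{\lambda}_n$ with a discrete sensitivity whose convergence to $\tilde{\vec{\lambda}}(t_n)$ can be read off from the sensitivity-convergence result of~\cite{Bock1987,Sandu2008}.
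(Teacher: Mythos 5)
Your proposal follows the same overall strategy as the paper's proof: the main block \eqref{eq:nadp_aBDF_main} is a consistent BDF-$k$ discretization of the adjoint equation around $\tilde{\vec{y}}$, the whole difficulty sits in the $\mO(1)$-inconsistent starting values produced by \eqref{eq:nadp_aBDF_ini}--\eqref{eq:nadp_aBDF_start}, and the rescue is that only the component of the starting error along the principal root $z_1=1$ propagates into the interior, and that component is $\mO(h)$. The paper implements this by writing the initialization block as $[\tilde{\mat{A}}-h\mat{B}]\vec{\lambda}=\vec{e}_1\eta$, expanding via a Neumann series, and feeding the result into Henrici's Theorem 4.3, whose quantity $\delta_1=\frac{1}{\rho^\prime(1)}\sum_i\gamma_i(\lambda_{N-i}-\eta)$ is exactly your principal-mode amplitude $c_0(h)$; the identity that kills the $\mO(1)$ part, $\vec{\gamma}^\T\tilde{\mat{A}}^{-1}\vec{e}_1=1$, is there only ``verified easily for all BDF methods up to order 6.'' Your generating-function argument --- observing that the truncated sums of the adjoint initialization are precisely the convolution $\hat{\rho}\cdot G=1$ with $G(z)=1/\sum_i\alpha_i z^i$, so that a residue at the simple pole $z=1$ together with $\rho^\prime(1)=\sigma(1)=1$ forces $g_j\to 1$ --- is a structural proof of that same identity, and is arguably the more satisfying route since it explains \emph{why} it holds rather than checking six cases. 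Two caveats. First, $\vec{f}_{\vec{y}}(t,\tilde{\vec{y}}(t))\in C^1$ only gives $\tilde{\vec{\lambda}}\in C^2$, not $C^{k+1}$, so your local truncation errors in the main block are $\mO(h^2)$ rather than $\mO(h^{k+1})$; this still accumulates to $\mO(h)$ over $\mO(1/h)$ steps (which is all Henrici's theorem delivers anyway), but you should not claim the higher rate under the stated hypotheses. Second, the ``cleaner alternative closure'' via \cite{Bock1987,Sandu2008} is not available: those results concern convergence of $\vec{\lambda}_1$ (equivalently, of the gradient at $\ts$), not of the interior values $\vec{\lambda}_n$ to $\tilde{\vec{\lambda}}(t_n)$ --- the interior statement is precisely what this lemma establishes for the first time, so invoking it there would be circular. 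Also note that for a fixed $t\in(\ts,\tf)$ the termination block never needs to be crossed, since $t_{m+1}=\ts+(m+1)h\to\ts$; your extra paragraph on propagating through \eqref{eq:nadp_aBDF_term} is harmless but unnecessary for the statement as posed.
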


\begin{proof}
To ease the notion, we consider a scalar initial value problem, i.e. $d=1$.
Nevertheless, the proof is also valid for systems of initial value problems.
Furthermore, we define some abbreviations $B(t):=f^\T_y(t,\tilde{y}(t))$ and
$\eta:=J^\prime(\tilde{y}(\tf))^\T$. Thus, the 
starting procedure \eqref{eq:nadp_aBDF_ini}-\eqref{eq:nadp_aBDF_start}
can be written equivalently using $\vec{\lambda}^\T:=\left[
\begin{array}{lll}\lambda_N & \cdots &\lambda_{N-k+1}
\end{array}\right]$ and the $k\times 1$ unit vector $\vec{e}_1$
\begin{align}
\left[ \tilde{\mat{A}}-h\mat{B}(t_N, h)\right]\vec{\lambda} = \vec{e}_1
\eta \nonumber
\end{align}
where $\tilde{\mat{A}} = \bar{\mat{I}} \left[\mat{A}_{N-k+1:N,N-k+1:N}\right]
^\T \bar{\mat{I}}$ for the reverse identity matrix $\bar{\mat{I}}$ and
the matrix $\mat{A}$ from page \pageref{p:matA}, and
\begin{align}
\mat{B}(t_N,h):= \left(\begin{array}{cccc}
B(t_N) & &0\\
 & \ddots &  \\
0 && B(t_N-(k-1)h)
\end{array} \right)=
B(t_N) \mat{I} 
+ \mO(h)
\left(\begin{array}{cccc} 0& 0& \cdots &0 \\
0&1&&0\\
 \vdots&&\ddots &  \\
0 &0 && 1
\end{array} \right)\nonumber
\end{align}
using the Taylor series expansion of the entries $B(t_N-ih)$ around $t_N$. The
matrix $\tilde{\mat{A}}$ is nonsingular since $\alpha_0\neq 0$. Furthermore, for
$h$ small enough to satisfy $\nrm{h \tilde{\mat{A}}^{-1} \mat{B}(t_N,h)}{}<1$ we
can use the Neumann series to express the inverse of
$\mat{I}-h\tilde{\mat{A}}^{-1} \mat{B}(t_N,h)$, see for example \cite[Sec.
II.1]{Werner2000}, which yields
\begin{align} 
\vec{\lambda} &= \left[\tilde{\mat{A}}\left(\mat{I}
-h \tilde{\mat{A}}^{-1}\mat{B}(t_N,h) \right) \right]^{-1} \vec{e}_1\eta 
=\left\lbrace \sum_{j=0}^\infty \left( h
\tilde{\mat{A}}^{-1}\mat{B}(t_N,h)\right)^j\right\rbrace 
\tilde{\mat{A}}^{-1} \vec{e}_1\eta \nonumber\\
&=\left\lbrace \mat{i}+ h
\tilde{\mat{A}}^{-1}\mat{B}(t_N,h) + \mO(h^2)\right\rbrace 
\tilde{\mat{A}}^{-1} \vec{e}_1 \eta \nonumber\\
&= \tilde{\mat{A}}^{-1}\vec{e}_1 \eta
+ h \tilde{\mat{A}}^{-1}B(t_N)\tilde{\mat{A}}^{-1}\vec{e}_1 \eta
+\mO(h^2).\label{eq:startla}
\end{align}
We want to apply Theorem 4.3 of \cite{Henrici1963} to the linear
differential equation \eqref{eq:app_aIVP}. Note that the starting procedure
satisfies the assumptions of the theorem due to \eqref{eq:startla}. As BDF
methods are
strongly stable, the only essential root of the characteristic polynomial
$\rho(z)=\sum_{i=0}^k \alpha_i z^{k-i}$ is the principal root $z_1=1$. Thus, 
Theorem 4.3 of \cite{Henrici1963} gives for certain constants $K_1$ and $K_2$
\begin{align}
\lambda_n-\tilde{\lambda}(t_n) = \exp{\left( \int_{\tf}^{t_n} -B(\tau)\td
\tau\right) } \delta_1 + \theta \left(K_1+\frac{K_2}{t_n-h-\tf}\right) h
\nonumber
\end{align}
where $\abs{\theta}<1$ in the scalar case ($\nrm{\vec{\theta}}{}<1$ for
$d>1$). The quantity $\delta_1$ is
\begin{align}
\delta_1&:= \frac{1}{\rho^\prime(1)}\sum_{i=0}^{k-1} \gamma_i
(\lambda_{N-i}-\eta), \text{ where }\sum_{i=0}^{k-1} \gamma_i
z^i:=\frac{\rho(z)}{z-1}\nonumber
\end{align}
and the coefficients $\gamma_i$ sum up to 1, i.e. $\sum_{i=0}^{k-1}\gamma_i=1$.
The latter fact together with equation \eqref{eq:startla} gives for
$\vec{\gamma}^\T:=\left[ \begin{array}{lll}\gamma_0 & \cdots &
\gamma_{k-1}\end{array}\right]$
\begin{align}
\delta_1= \vec{\gamma}^\T \vec{\lambda} - \eta 
&= \vec{\gamma}^\T \left[\tilde{\mat{A}}^{-1}\vec{e}_1 \eta 
+ h \tilde{\mat{A}}^{-1}B(t_N)\tilde{\mat{A}}^{-1}\vec{e}_1 \eta
+\mO(h^2) \right] -\eta\nonumber \\
&= \left[ \vec{\gamma}^\T \tilde{\mat{A}}^{-1}\vec{e}_1 - 1 \right] \eta
+ h \vec{\gamma}^\T
\tilde{\mat{A}}^{-1}B(t_N)\tilde{\mat{A}}^{-1}\vec{e}_1 \eta +\mO(h^2).
\nonumber
\end{align}
The coefficient $ \vec{\gamma}^\T \tilde{\mat{A}}^{-1}\vec{e}_1 - 1$ of the
first addend vanishes which can be verified easily for all BDF methods up to
order 6. Thus, we obtain
\begin{align}
\lambda_n-\tilde{\lambda}(t_n) 
= h \exp{\left( \int^{\tf}_{t_n} B(\tau)\td \tau\right) } 
\vec{\gamma}^\T \tilde{\mat{A}}^{-1}B(t_N)\tilde{\mat{A}}^{-1}\vec{e}_1 \eta&
\nonumber\\
+ h\, \theta \left(K_1+\frac{K_2}{t_n-h-\tf}\right) &+\mO(h^2)
\nonumber
\end{align}
where both coefficients are bounded which proves the assertion. 
\end{proof}

The main result of this subsection is the following.

\begin{theorem}\label{theo:Hconv}
Let $\vec{f}(t,\vec{y})$ be continuously differentiable with respect
to $(t,\vec{y})$. Let $\vec{\lambda}(t)$ be the exact solution of the adjoint
differential equation \eqref{eq:aIVP} and let $\{\vec{\lambda}_n\}_{n=1}^N$ be
computed by \eqref{eq:nadp_aBDF}. Then, for a fixed timepoint
$t_n=t\in(\ts,\tf)$ there exists $H>0$ such that
\begin{align}
 \norm{ \vec{\lambda}_n-\vec{\lambda}(t_n)} = \mO(h) 
\end{align}
as the grid is refined with $H>h\rightarrow 0$.
\end{theorem}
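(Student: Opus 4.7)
The plan is to reduce the assertion to Lemma \ref{lem:Appconv} by comparing the target adjoint $\vec{\lambda}(t)$, which is the solution of the adjoint equation around the exact trajectory $\vec{y}(t)$, with the auxiliary adjoint $\tilde{\vec{\lambda}}(t)$ solving \eqref{eq:app_aIVP} around a smooth interpolant $\tilde{\vec{y}}(t)$ of the BDF iterates. The triangle inequality gives
\begin{align}
\norm{\vec{\lambda}_n - \vec{\lambda}(t_n)} \leq \norm{\vec{\lambda}_n -
\tilde{\vec{\lambda}}(t_n)} + \norm{\tilde{\vec{\lambda}}(t_n) -
\vec{\lambda}(t_n)},\nonumber
\end{align}
and the proof will bound each summand by $\mO(h)$ separately.

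First I would construct $\tilde{\vec{y}}(t)$ as the quadratic spline interpolating the pairs $\{(\vec{y}_n,\vec{f}(t_n,\vec{y}_n))\}_{n=0}^N$ suggested in Section \ref{sec:Hconv}. With $\vec{f}$ sufficiently smooth, the composition $\vec{f}_{\vec{y}}(t,\tilde{\vec{y}}(t))$ is continuously differentiable in $t$, so the hypotheses of Lemma \ref{lem:Appconv} are met and the first summand is $\mO(h)$.

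For the second summand I would argue by continuous dependence of the adjoint IVP on its data. From the classical convergence of the non-adaptive BDF method at grid points, $\norm{\vec{y}_n - \vec{y}(t_n)} = \mO(h^k)$, together with the interpolation error bound for the spline, one obtains $\norm{\tilde{\vec{y}} - \vec{y}}_{C^0[\ts,\tf]^d} = \mO(h)$. Since $\vec{f}_{\vec{y}}$ is continuous on a compact tube around the trajectory, it is uniformly Lipschitz there, giving $\norm{\vec{f}_{\vec{y}}(\cdot,\tilde{\vec{y}}(\cdot)) - \vec{f}_{\vec{y}}(\cdot,\vec{y}(\cdot))}_\infty = \mO(h)$, while the terminal data satisfy $\norm{J^\prime(\tilde{\vec{y}}(\tf))^\T - J^\prime(\vec{y}(\tf))^\T} = \mO(h^k)$. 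Writing the difference $\vec{e}(t):=\tilde{\vec{\lambda}}(t)-\vec{\lambda}(t)$ as the solution of a linear terminal value problem with an $\mO(h)$ perturbation of the coefficient matrix and an $\mO(h^k)$ perturbation of the terminal value, a standard Gronwall argument (backwards in time) yields $\norm{\vec{e}(t)} = \mO(h)$ uniformly on $[\ts,\tf]$. Combining the two estimates proves the claim.

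The main obstacle is the regularity gap: to invoke Lemma \ref{lem:Appconv} the interpolant $\tilde{\vec{y}}$ must be smooth enough that $\vec{f}_{\vec{y}}(\cdot,\tilde{\vec{y}}(\cdot))$ is $C^1$ in $t$, which implicitly requires a bit more than the $C^1$ hypothesis on $\vec{f}$ stated in the theorem; at the same time, the same interpolant must approximate $\vec{y}$ well enough in $C^0$ to drive the Gronwall estimate. The quadratic Hermite-type spline used above is designed to satisfy both requirements simultaneously, but verifying the interpolation error and the compatibility at the self-starting segment $t_0,\dots,t_m$ is the most delicate technical step.
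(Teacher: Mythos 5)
Your proposal is correct and follows essentially the same route as the paper: the same triangle-inequality split through the auxiliary adjoint $\tilde{\vec{\lambda}}$ of \eqref{eq:app_aIVP} around the quadratic Hermite spline $\tilde{\vec{y}}$, Lemma \ref{lem:Appconv} for the first summand, and an $\mO(h)$ bound on $\nrm{\tilde{\vec{y}}-\vec{y}}{C^0[\ts,\tf]^d}$ propagated to the adjoint solutions (the paper uses the explicit solution formula for the linear adjoint equations where you use a backward Gronwall estimate, which is an interchangeable step). Your closing observation about the regularity gap is apt, since the paper's own proof likewise invokes Lemma \ref{lem:Appconv} without verifying that $\vec{f}_{\vec{y}}(\cdot,\tilde{\vec{y}}(\cdot))$ is continuously differentiable in $t$ under only the stated $C^1$ hypothesis on $\vec{f}$.
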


\begin{proof}
Let the continuously differentiable spline $\tilde{\vec{y}}(t)$ be composed
of quadratic polynomials on $I_n$ such that $\tilde{\vec{y}}(t_n)=\vec{y}_n$,
$\tilde{\vec{y}}(t_{n+1})=\vec{y}_{n+1}$ and
$\dot{\tilde{\vec{y}}}(t_{n+1})=\vec{f}(t_{n+1},\vec{y}_{n+1})$ for
$n=0,\dots,N-1$. Furthermore, we define the interpolation operator $\op{\mI}$
that maps a continuously differentiable function $\vec{g}(t)$ to a continuously
differentiable spline $\op{\mI}\vec{g}(t)$ that is composed of quadratic
polynomials on $I_n$ with $\op{\mI}\vec{g}(t_n)=\vec{g}(t_n)$,
$\op{\mI}\vec{g}(t_{n+1})=\vec{g}(t_{n+1})$ and
$\dot{\op{\mI}\vec{g}}(t_{n+1})=\dot{\vec{g}}(t_{n+1})$ for
$n=0,\dots,N-1$. Then, the difference of $\tilde{\vec{y}}(t)$ and
$\op{\mI}\vec{y}(t)$ in $C^0$-norm is
\begin{align}
\nrm{\tilde{\vec{y}}(t)-\op{\mI}\vec{y}(t)}{C^0[\ts,\tf]^d}=\mO(h)\nonumber
\end{align}
using Taylor expansions and the convergence of the nominal BDF method. Due to
the assumption on $\vec{f}(t,\vec{y})$, the exact nominal solution $\vec{y}(t)$
of \eqref{eq:IVP} is twice continuously differentiable such that
\begin{align}
\nrm{\vec{y}(t)-\op{\mI}\vec{y}(t)}{C^0[\ts,\tf]^d}=\mO(h^2)\nonumber
\end{align}
due to the approximation property of quadratic splines. Thus, it is
\begin{align} \label{eq:spline_esol}
\nrm{\tilde{\vec{y}}(t)-\vec{y}(t)}{C^0[\ts,\tf]^d} \leq
\nrm{\tilde{\vec{y}}(t)-\op{\mI}\vec{y}(t)}{C^0} +
\nrm{\op{\mI}\vec{y}(t)-\vec{y}(t)}{C^0} = \mO(h).
\end{align}
Since both adjoint differential equations \eqref{eq:aIVP} and
\eqref{eq:app_aIVP} are linear, their solutions $\vec{\lambda}(t)$ and
$\tilde{\vec{\lambda}}(t)$ can be given explicitly. Substracting the exact
adjoint solutions and using \eqref{eq:spline_esol} yields in the $C^0$-norm
\begin{align}\label{eq:laTilde_la}
\nrm{\tilde{\vec{\lambda}}(t)-\vec{\lambda}(t)}{C^0[\ts,\tf]^d} =
\mO(h)
\end{align}
which implies directly the pointwise convergence for every $t\in[\ts,\tf]$.
Thus, together with Lemma \ref{lem:Appconv} we obtain
\begin{align}
\norm{\vec{\lambda}_n-\vec{\lambda}(t_n)}\leq
\norm{\vec{\lambda}_n-\tilde{\vec{\lambda}}(t_n)} +
\norm{\tilde{\vec{\lambda}}(t_n)-\vec{\lambda}(t_n)} = \mO(h)\nonumber
\end{align}
for $t_n\in (\ts,\tf)$.
\end{proof}

\begin{remark}
If $\vec{f}(t,\vec{y})$ is $k$-times continuously differentiable in
$(t,\vec{y})$, the start errors of the nominal BDF method of order $k$ are small
enough (i.e. the convergence of order $k$ is guaranteed), and the spline is of
corresponding order, then \eqref{eq:laTilde_la} holds with order $k$ in $h$.
\end{remark}

The discrete adjoints resulting from the adjoint initialization and termination
steps differ from the exact adjoints in a constant way. For $n=N,\dots,N-k+1$
the difference is bounded by a positive constant $c_n$ times the state
$\vec{\lambda}(\tf)=J^\prime(\vec{y}(\tf)^\T $, i.e.
\begin{align}
\norm{\vec{\lambda}_n-\vec{\lambda}(t_n)} \leq c_n
\norm{J^\prime(\vec{y}(\tf))}+\mO(h).\nonumber
\end{align}
This can be shown using \eqref{eq:laTilde_la}, the Taylor expansion of
$\tilde{\vec{\lambda}}(t_n)$ around $\tf$ and the Neumann series of the inverse
of  $\al{0}{n}\mat{I}-h\vec{f}_{\vec{y}}(t_{n+1},\vec{y}_{n+1})$.
For the discrete adjoints from the adjoint termination steps
\eqref{eq:nadp_aBDF_term}, one also needs Lemma \ref{lem:Appconv} and obtains a
multiple of $\vec{\lambda}(\ts)$.

Without modifications of the adjoint initialization steps
\eqref{eq:nadp_aBDF_ini}-\eqref{eq:nadp_aBDF_start}, the discrete adjoints on
the main part converge linearly to the exact adjoint solution
$\vec{\lambda}(t)$ of \eqref{eq:aIVP}. Nevertheless, we still have to consider
the oscillations of the discrete adjoints at the interval ends of $[\ts,\tf]$
which are due to the
inconsistency of the adjoint initialization and termination steps. We will
do this in the next section.

\subsection{Convergence of the finite element approximation to the weak adjoint}
\label{sec:Bconv}

We will prove the convergence of the finite element approximation of the weak
adjoint to the exact weak adjoint of \eqref{eq:aIVP} given by
\eqref{eq:Boptcond_adj} with respect to the total variation norm of
$\NBV[\ts,\tf]^d$ (i.e. strong convergence).

\begin{theorem}\label{theo:conv}
The finite element approximation $\vec{\Lambda}^h(t)=\sum_{n=1}^N
h_{n-1}\vec{\lambda}_n H_n(t)$ given by the discrete adjoint scheme
\eqref{eq:nadp_aBDF} of a non-adaptive BDF method of constant
order $k$ with constant stepsize $h$ converges to the exact weak adjoint
solution
$\vec{\Lambda}(t)=\int_\ts^t \vec{\lambda}(\tau)\td \tau$
where $\vec{\lambda}(\tau)$ solves \eqref{eq:aIVP}. The convergence is with
respect to the total variation norm of $\NBV[\ts,\tf]^d$.
\end{theorem}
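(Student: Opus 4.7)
I would approach the theorem by combining the interior convergence of the discrete adjoints (Theorem \ref{theo:Hconv}) with a careful comparison of the jumps of $\vec{\Lambda}^h$ against the local integrals of $\vec{\Lambda}$.

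First I would write the error at a grid point as $\vec{\Lambda}^h(t_n) - \vec{\Lambda}(t_n) = \sum_{k=1}^n h(\vec{\lambda}_k - \vec{\lambda}(t_k)) + \big(\sum_{k=1}^n h\vec{\lambda}(t_k) - \int_\ts^{t_n} \vec{\lambda}(\tau) d\tau\big)$. The first term is bounded using Theorem \ref{theo:Hconv}: on the inner region each summand is $\mO(h)$, so multiplied by the stepsize $h$ and summed over $\mO(h^{-1})$ inner grid points the total is $\mO(h)$. The boundary contributions from the finitely many inconsistent initialization and termination steps are $\mO(kh)=\mO(h)$ thanks to the a priori bounds on $\vec{\lambda}_n$ noted at the end of Section \ref{sec:Hconv}. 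The second term is a standard Riemann-sum error of size $\mO(h)$ for the smooth integrand $\vec{\lambda}$. This already gives $\sup_n \|\vec{\Lambda}^h(t_n) - \vec{\Lambda}(t_n)\| = \mO(h)$.

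Next I would pass from this pointwise bound to a total-variation bound. The variation of $\vec{e}^h := \vec{\Lambda}^h - \vec{\Lambda}$ on each subinterval $[t_{n-1},t_n]$ naturally splits into the continuous variation contributed by $-\vec{\Lambda}$ and the jump $h\vec{\lambda}_n$ of $\vec{\Lambda}^h$ at $t_n$. These must be paired: for grid points in the inner region, $h\vec{\lambda}_n - \int_{t_{n-1}}^{t_n} \vec{\lambda}(\tau)d\tau = \mO(h^2)$ by Theorem \ref{theo:Hconv} combined with a local quadrature estimate of the smooth $\vec{\lambda}$. Summed over the $\mO(h^{-1})$ inner intervals this produces an $\mO(h)$ contribution, and the finitely many boundary intervals are handled separately via the a priori bounds on the initialization and termination steps, contributing another $\mO(h)$ term. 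Together this yields $\|\vec{\Lambda}^h - \vec{\Lambda}\|_{\NBV[\ts,\tf]^d} = \mO(h)$, which gives the claimed convergence.

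The main obstacle is obtaining the cancellation inside the total variation norm. A step function and a smooth function typically have order-one total variation distance from their sawtooth-like structure alone, so the proof has to pair each jump of $\vec{\Lambda}^h$ with the corresponding local decrease of $\vec{\Lambda}$ (a kind of Abel-summation within the TV norm) so that the pointwise convergence of Theorem \ref{theo:Hconv} actually controls the net variation. Carrying this pairing out rigorously in the supremum defining the NBV-norm, rather than in a weaker $L^1$-type sense, is the delicate technical step, and it is where the precise scaling $h\vec{\lambda}_n$ built into the definition \eqref{eq:La_h} of $\vec{\Lambda}^h$ plays its role.
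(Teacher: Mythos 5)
Your first step --- the uniform bound $\sup_n\norm{\vec{\Lambda}^h(t_n)-\vec{\Lambda}(t_n)}=\mO(h)$ obtained from the telescoping sum, Theorem \ref{theo:Hconv} on the interior, the a priori bounds on the finitely many initialization and termination steps, and a Riemann-sum estimate --- is sound, and it uses the same ingredients as the paper. The gap is in the second step, and you have in fact put your finger on it yourself: the ``pairing'' of each jump $h\vec{\lambda}_n$ of $\vec{\Lambda}^h$ against the continuous decrease $-\int_{t_{n-1}}^{t_n}\vec{\lambda}(\tau)\,\td\tau$ contributed by $-\vec{\Lambda}$ cannot be carried out inside the total variation norm. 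Total variation is additive over the continuous part and the jump part: componentwise, the variation of $\vec{\Lambda}^h-\vec{\Lambda}$ over $(t_{n-1},t_n]$ is at least $\abs{h\lambda_n}+\int_{t_{n-1}}^{t_n}\abs{\lambda(\tau)}\td\tau$, not $\abs{h\lambda_n-\int_{t_{n-1}}^{t_n}\lambda(\tau)\td\tau}$; the $\mO(h^2)$ cancellation you invoke controls only the net displacement over the subinterval, not the variation. Concretely, take $\lambda\equiv 1$ and suppose even that $\lambda_n=\lambda(t_n)$ exactly for all $n$: then $\Lambda^h-\Lambda$ is a sawtooth with $N$ teeth of height $h$, whose total variation equals $2(\tf-\ts)$ for every $h$. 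No refinement of the Abel-summation idea can therefore produce $\mO(h)$, or even $o(1)$, in the raw variation.

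The paper takes a different route precisely at this point: it invokes the Riesz identification of the total variation norm with the dual norm of $\left(C^0[\ts,\tf]\right)'$ and estimates $\sup_{\nrm{g}{C^0}=1}\abs{\int_\ts^\tf g\,\td(\Lambda-\Lambda^h)}$, rewriting the pairing as $\int_\ts^\tf\lambda(t)g(t)\,\td t-\sum_n h\lambda_n g(t_n)$ and treating it as a quadrature error; there the factor $h$ multiplies $\sum_n\abs{\lambda(t_n)-\lambda_n}$ directly and Theorem \ref{theo:Hconv} finishes the argument. The cancellation you are after is thus realized only after smearing against a continuous test function $g$, never in the variation itself. If you rebuild your second step along these lines, be aware that the delicate point merely relocates: the quadrature error must be controlled \emph{uniformly} over all continuous $g$ with $\nrm{g}{C^0}=1$, and such $g$ carry no modulus-of-continuity bound, so the standard $\mO(h^2)$ trapezoidal estimate is not available for the term $\int\lambda g\,\td t$; this uniformity is exactly what separates a genuine dual-norm (total variation) bound from the weak-$*$ convergence of the Riemann sums, and it deserves an explicit argument.
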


\begin{proof} 
Let $h:=\frac{\tf-\ts}{N}$ be the stepsize of the equidistant grid. Thus, the
nodes are $t_n=\ts+nh$ for $n=0,\dots, N$. We use the norms mentioned in Section
\ref{sec:gCons} and consider firstly the $i$th component, $1\leq i \leq d$. To
ease the notion, we set $\Lambda:=\vec{\Lambda}_i$,
$\Lambda^h:=\vec{\Lambda}_i^h$, $g:=\vec{g}_i$ such that the dual norm reads
\begin{align}
\nrm{\Lambda-\Lambda^h}{\NBV[\ts,\tf]}=
\sup_{\nrm{g}{C^0[\ts,\tf]}=1} \abs{\int_{\ts}^{\tf}
g(t)\td \left(\Lambda-\Lambda^h \right)(t) }.\nonumber
\end{align}
As $\Lambda$ is given by $\Lambda(t)=\int_\ts^t \lambda(\tau)\td
\tau$ and $\Lambda^h$ is a jump function it holds \cite[Sec.36
Example 3]{Kolmogorov1970} %
\begin{align}
\int_{\ts}^{\tf}
g(t)\td \left(\Lambda-\Lambda^h \right)(t) =
\int_\ts^\tf \lambda(t)g(t) \td t -
\sum_{n=1}^N h \lambda_n g(t_n).\nonumber
\end{align}
Approximating the integral by the composite trapezoidal rule for
equidistant grids yields
\begin{align}
h\left\lbrace \frac{1}{2} \lambda(t_0) g(t_0)+
\sum_{n=1}^{N-1}\lambda(t_n)g(t_n)+\frac{1}{2} \lambda(t_N)g(t_N) \right\rbrace 
+ \mO(h^2)- \sum_{n=1}^N h\lambda_n g(t_n)\nonumber\\
= h \left\lbrace \frac{1}{2} \lambda(t_0)g(t_0) +
\sum_{n=1}^{N}\left[ \lambda(t_n)-\lambda_n\right] g(t_n)
- \frac{1}{2} \lambda(t_N) g(t_N) \right\rbrace + \mO(h^2).
\nonumber
\end{align}
We obtain a bound for the $\NBV[\ts,\tf]^d$-dual norm of
$\vec{\Lambda}-\vec{\Lambda}^h$ by taking the absolute value, using the triangle
inequality and the fact that
$\nrm{g}{C^0[\ts,\tf]}=1$, i.e.
\begin{align}
\nrm{\Lambda-\Lambda^h}{\NBV[\ts,\tf]}\leq h \left\lbrace \abs{\lambda(t_0)}+
\sum_{n=1}^{N} \abs{\lambda(t_n)-\lambda_n }
+ \abs{\lambda(t_N)} \right\rbrace + \mO(h^2). \nonumber
\end{align}
With Theorem \ref{theo:Hconv} the sum over the main part becomes
\begin{align}
\sum_{n=m+1}^{N-k} \abs{\lambda(t_n)-\lambda_n} =
\sum_{n=m+1}^{N-k} \mO(h)=\mO(1) \nonumber
\end{align}
such that the norm is bounded by
\begin{align}
&\nrm{\Lambda-\Lambda^h}{\NBV[\ts,\tf]} \nonumber\\
&\leq 
h \left\lbrace \abs{\lambda(t_0)} + \sum_{n=1}^{m}
\abs{\lambda(t_n)-\lambda_n} + \mO(1)+\sum_{n=1}^{k-1}
\abs{\lambda(t_{N-n})-\lambda_{N-n}}
+ \abs{\lambda(t_N)} \right\rbrace + \mO(h^2). \nonumber
\end{align}
Since the magnitude of all remaining addends is bounded according to the end of
Section \ref{sec:Hconv} and their number is independent of the step number $N$,
it is $\nrm{\Lambda-\Lambda^h}{\NBV[\ts,\tf]} = \mO(h)$.
As this holds for all $i=1,\dots,d$ and the dual norm coincides with the total
variation norm (cf. Section \ref{sec:gCons}), the assertion is shown.
\end{proof}

By small modifications in the proof of Theorem \ref{theo:conv}, the assertion
can be widened to variable stepsizes in the starting procedure.

The uniform convergence in the total variation norm of $\NBV[\ts,\tf]^d$ implies
the pointwise convergence on the entire time interval which can be shown by
utilizing the particular partition $\{\ts,\theta,\tf\}$ for an arbitrary time
point $\theta \in [\ts,\tf]$. Thus, Theorem \ref{theo:conv} implies the
pointwise convergence of $\vec{\Lambda}^h(t)$ to $\vec{\Lambda}(t)$ on the
entire time interval at least with the same convergence rate.

\section{Numerical results}
\label{sec:numres}

We illustrate the theoretical results with the help of a nonlinear test case
with analytic nominal and adjoint solutions. The Catenary
\cite[p.15]{Hairer1993} is given by a second-order ODE
\begin{align}
\ddot{y}(t)= p\sqrt{1+\dot{y}(t)^2},\quad p>0.\nonumber
\end{align}
We reformulate the initial value problem as system of first-order equations 
\begin{align}
\dot{y}_1(t) &= y_2(t)\nonumber\\
\dot{y}_2(t) &= p \sqrt{1+y_2(t)^2}\nonumber
\end{align}
and solve it on the interval $[0,2]$ for $p=3$ and $\vec{y}(0)=[1/3 \cosh(-3)
\enskip \sinh( -3 )]^\T$. As criterion of interest we choose
$J(\vec{y}(2))=y_1(2)$. The analytic nominal solution is
\begin{align}
\vec{y}(t) = 
\left( \begin{array}{c} B + \frac{1}{p}\cosh(pt+A)\\
\sinh(pt+A) \end{array} \right) \nonumber
\end{align}
and the analytic weak adjoint solution in the space $\NBV[\ts,\tf]^2$ is
\begin{align}\label{eq:anaASol}
\vec{\Lambda}(t)= 
\left( \begin{array}{c} t \\ 
-\frac{1}{p^2} \ln ( \cosh( pt+A ) ) + \frac{2}{p^2}\sinh( p\tf+A )
\arctan\left( \e^{ pt+A } \right) 
\end{array} \right)
\end{align}
where $A$ and $B$ are determined implicitly by the initial values.

\subsection{Non-adaptive BDF method}

We consider a non-adaptive BDF method of constant order $2$ on an equidistant
grid with stepsize $h$. The self-starting procedure consists of two first-order
BDF steps with stepsize $h/2$. The simulations are performed in Matlab.

The lower row of Figure \ref{fig:BDF2_discrAdj} compares the discrete adjoints
for two different stepsizes $h=2^{-4}$ and $h=2^{-6}$ to the analytic solution
of the adjoint differential equation. The oscillations of the discrete
adjoints at the interval ends are due to the inconsistency of the adjoint
initialization and termination steps of the discrete
adjoint scheme with the adjoint differential equation (cf. Section
\ref{sec:aBDF}). Nevertheless, the discrete adjoints converge on the
open interval $(0,2)$ towards the analytic adjoint solution as proven by
Theorem \ref{theo:Hconv}. In the upper row of Figure
\ref{fig:BDF2_discrAdj} the finite element approximation $\vec{\Lambda}^h(t)$ is
compared to the weak adjoint $\vec{\Lambda}(t)$ given by
\eqref{eq:anaASol}. It converges on the whole time interval as shown by Theorem
\ref{theo:conv}.

\begin{figure}
\centering
\subfigure{\includegraphics[width = 0.47\textwidth]{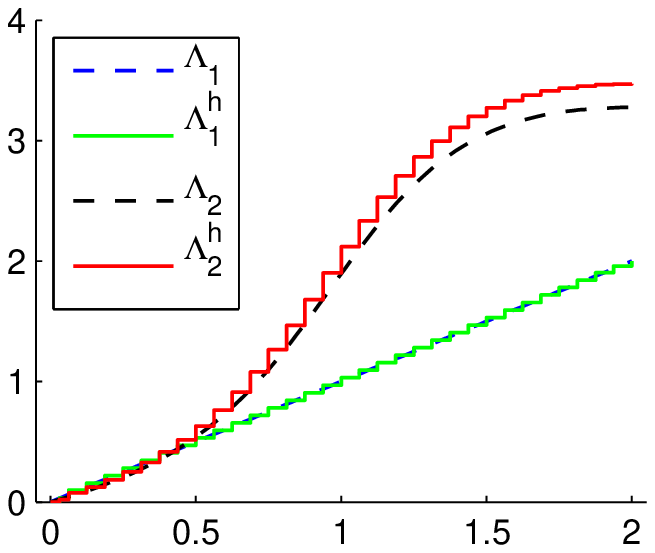}}
\quad
\subfigure{\includegraphics[width = 0.47\textwidth]{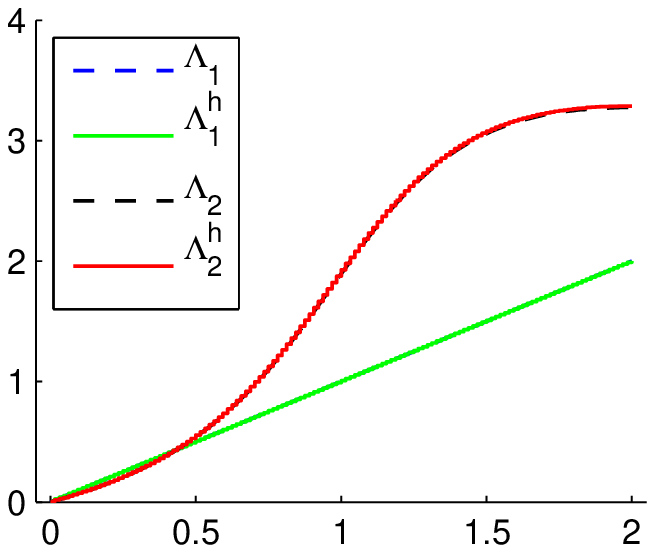}}\\
\addtocounter{subfigure}{-2}
\subfigure[$h= 2^{-4}$]{\includegraphics[width =
0.47\textwidth]{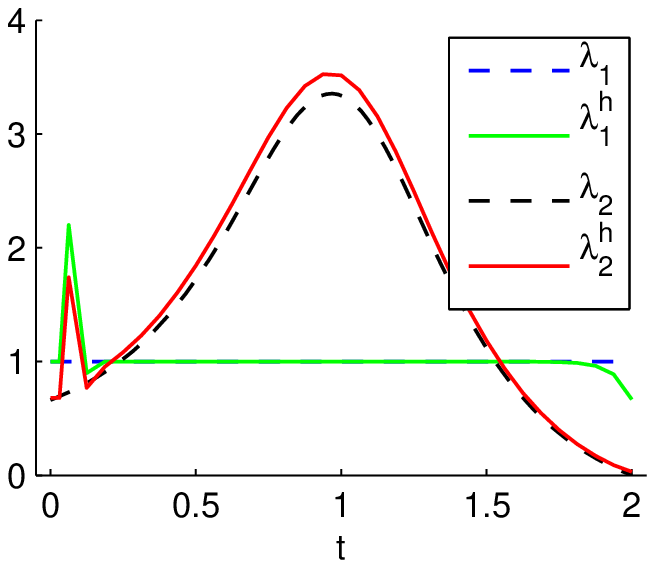}}
\quad
\subfigure[$h= 2^{-6}$]{\includegraphics[width =
0.47\textwidth]{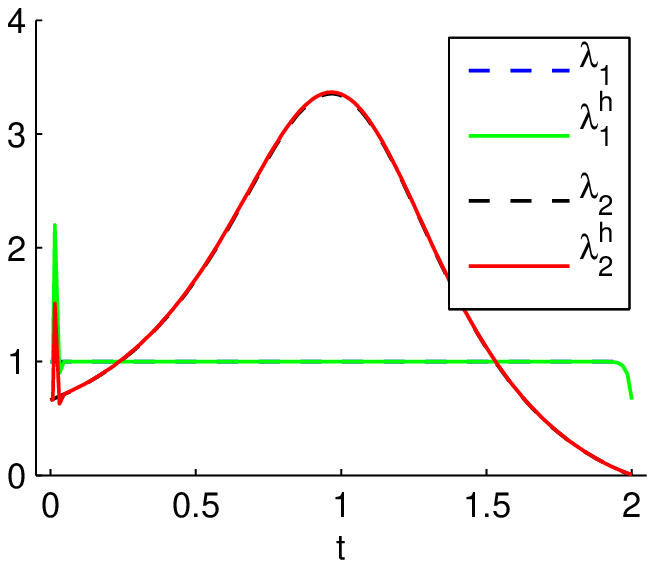}}
\caption{Results of the non-adaptive BDF method for two different
stepsizes. Comparison of the finite element approximation of the weak adjoint
and the analytic weak adjoint (top) as well as the discrete adjoints in
comparison to analytic Hilbert space adjoint (bottom) for different stepsizes.}
\label{fig:BDF2_discrAdj}
\end{figure}

Figure \ref{fig:conv} shows the Euclidean norm of the difference between the
analytic weak adjoint \eqref{eq:anaASol} and the finite element approximation,
i.e.
\begin{align}
 \text{Error} = \nrm{\vec{\Lambda}(t)-\vec{\Lambda}^h(t)}{2}, \nonumber
\end{align}
evaluated at the final time $t=\tf =2$ and at some interior time point $t=1.25$,
respectively, for shrinking stepsizes. The error evaluated at the final time
decreases at second order rate, a somewhat better behavior than predicted by
the convergence theory of Section \ref{sec:Bconv}. This might be due to the
second order convergence of the discrete adjoints at the initial time together
with a possible cancellation of discrepancies of the discrete adjoints at the
interval ends (depicted in the lower row of Figure \ref{fig:BDF2_discrAdj}).
Overall, this observation calls for a closer theoretical
investigation. The error at the interior time point $t=1.25$ shows the expected
linear convergence, cf. Theorem \ref{theo:conv} and the subsequent comment on
the pointwise convergence.

\begin{figure}
\centering
\includegraphics[width = 0.5\textwidth]{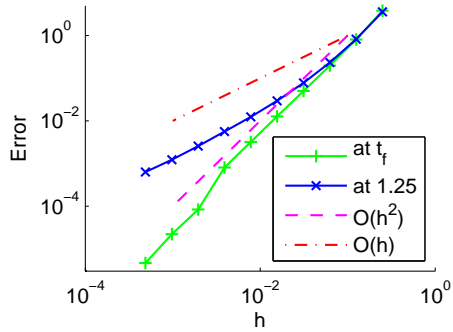}
\caption{Convergence of the finite element approximation of the weak adjoint to
the analytic weak adjoint. Error evaluated at the final time $\tf = 2$ and at
the interior time point $t=1.25$.}
\label{fig:conv}
\end{figure}

\subsection{Adaptive BDF method}

The software package \texttt{DAESOL-II} \cite{Albersmeyer2010d} provides an
efficient realization of a variable-order variable-stepsize BDF method based on
a sophisticated order and stepsize selection. Furthermore, it contains efficient
ways to compute the discrete adjoints
\cite{Albersmeyer2006,Albersmeyer2009,Albersmeyer2010d}.
We solved the Catenary for two different accuracies (relative tolerance
$10^{-4}$ and $10^{-9}$) to get a first asymptotic impression of the finite
element approximation of the adjoint in the case of fully adaptive BDF methods.
The results are depicted in Figure \ref{fig:adapt_discrAdj}.

In areas of constant BDF order (fourth row of Figure \ref{fig:adapt_discrAdj})
and constant stepsizes (third row), the discrete adjoints converge to the
analytic
adjoint solution (second row) as seen in the right column on the interval
$(1,1.7)$ approximately. On the other areas, i.e. where the order is varying and
stepsize is changing, the discrete adjoints are highly oscillating (second row).
Nevertheless, also in these cases, the finite element approximations
$\vec{\Lambda}^h(t)$ converge to the analytic weak adjoint solution
\eqref{eq:anaASol} on the entire time interval (first row of Figure
\ref{fig:adapt_discrAdj}).

\begin{figure}
\centering
\subfigure{\includegraphics[width =
0.47\textwidth]{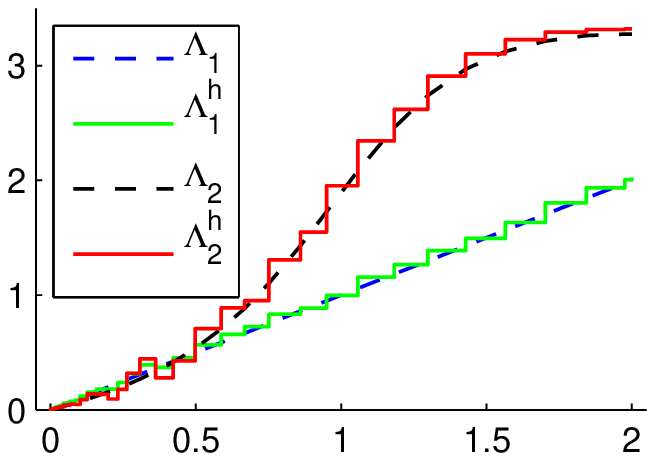}}
\quad
\subfigure{\includegraphics[width =
0.47\textwidth]{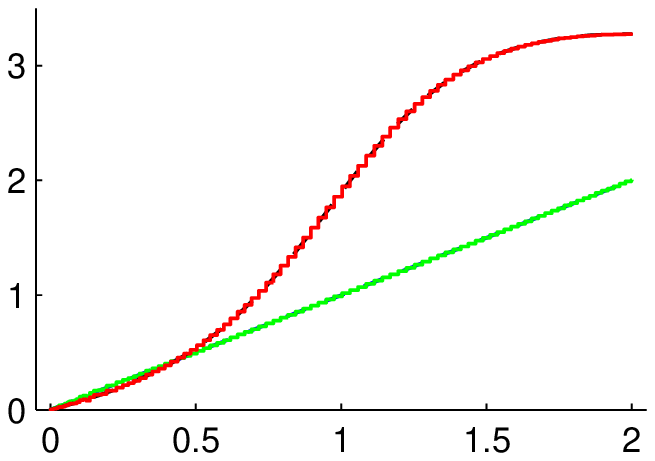}}\\
\subfigure{\includegraphics[width =
0.47\textwidth]{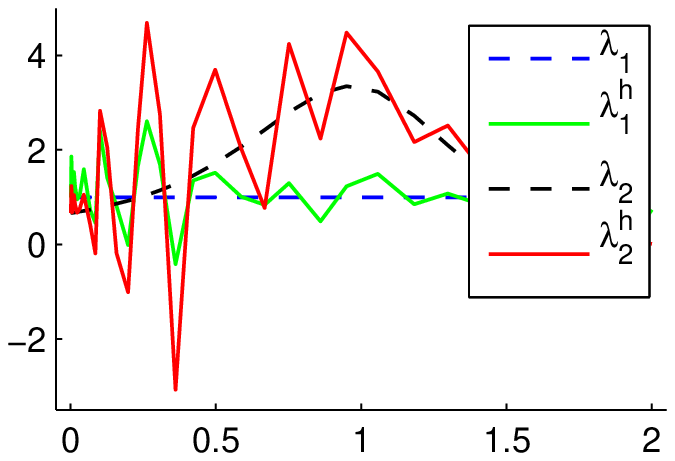}}
\quad
\subfigure{\includegraphics[width =
0.47\textwidth]{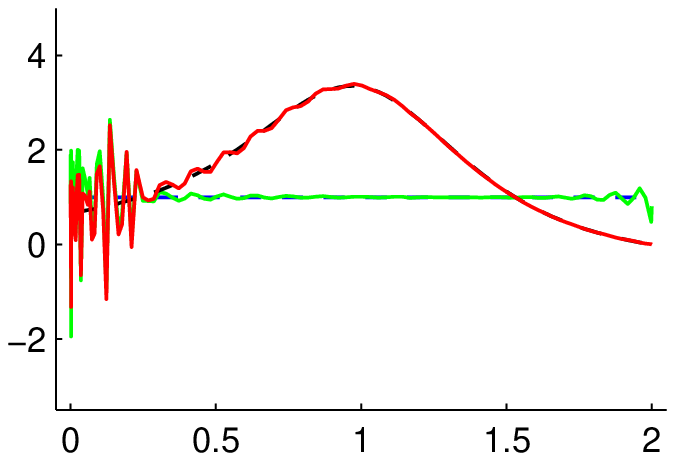}}\\
\subfigure{\includegraphics[width =
0.47\textwidth]{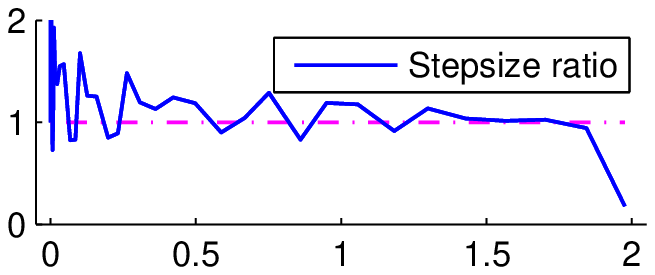}}
\quad
\subfigure{\includegraphics[width =
0.47\textwidth]{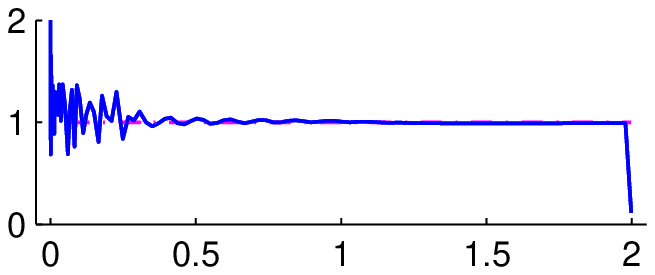}}\\
\addtocounter{subfigure}{-6}
\subfigure[Relative tolerance $10^{-4}$]{\includegraphics[width =
0.47\textwidth]{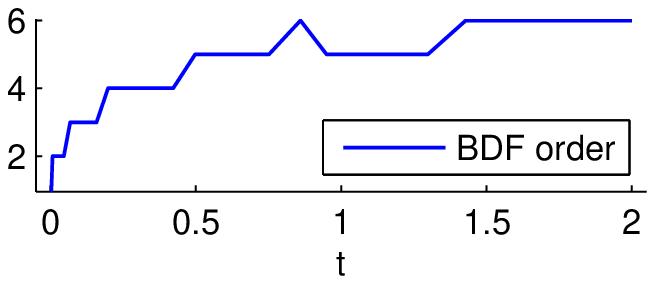}}
\quad
\subfigure[Relative tolerance $10^{-9}$]{\includegraphics[width =
0.47\textwidth]{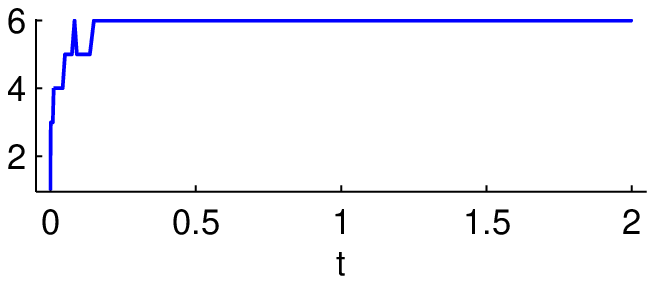}}
\caption{Results of the adaptive BDF method for different accuracies. Comparison
of the finite element approximation of the weak adjoint and the analytic weak
adjoint (top) as well as the discrete adjoints in comparison to analytic Hilbert
space adjoints (second row). Stepsize ratio (third row) and BDF order (bottom)
of the integration scheme.}
\label{fig:adapt_discrAdj}
\end{figure}

\section{Summary and outlook}

In this contribution, we have addressed the issue of relating the discrete
adjoints of variable-order variable-stepsize BDF methods to the solution of the
adjoint differential equation \eqref{eq:aIVP}. Since for multistep methods the
common Hilbert space setting is not appropriate to interpret the discrete
adjoints, we have developed a new Banach space approach. It is based on a
constrained variational problem in the space of all continuously differentiable
functions with Lagrange multiplier in the space of all normalized functions of
bounded variation. We have approximated the infinite-dimensional optimality
conditions by a Petrov-Galerkin discretization and have shown the equivalence of
the resulting equations to the BDF scheme and its discrete adjoint scheme
obtained by adjoint internal numerical differentiation. Thus, discretization and
optimization commute in the presented framework and the finite element
approximation of the weak adjoint is obtained by a simple post-processing of the
discrete
adjoints. Furthermore, we have demonstrated that the discrete adjoint scheme of
a non-adaptive BDF method produces discrete adjoints which converge linearly to
the solution of \eqref{eq:aIVP} on the inner time interval although the adjoint
initialization steps are inconsistent. We have used this
result to prove the linear convergence of the finite element approximation on
the entire time interval to the weak adjoint solution of \eqref{eq:aIVP} in the
space of normalized functions of bounded variation. 

The theoretical results have been observed numerically using a non-adaptive BDF
method to solve the Catenary. Additionally, we have given numerical evidence
that the
finite element approximation serves as proper quantity to approximate the weak
adjoint also in the case of fully adaptive BDF methods, i.e. also in areas of
variable order and variable stepsize. 

Thus, we now have a quantity at hand which can be used within global error
estimation techniques. The functional-analytic framework allows to
carry over estimation techniques from finite element methods to BDF methods.
Furthermore, the approximations to the weak adjoints can now be computed
efficiently and accurately by automatic differentiation of the efficient
variable-order variable-stepsize BDF method without the need of explicit
derivation of the adjoint equations.

\begin{acknowledgements}
The authors express their gratitude to Christian Kirches and \mbox{Andreas}
Potschka for valuable discussions on the subject. Scientific support of the
DFG-Graduate-School 220 ``Heidelberg Graduate School of Mathematical and
Computational Methods for the Sciences'' is gratefully acknowledged. Funding
graciously provided by the German Ministry of Education and Research (Grant ID:
03MS649A), and the Helmholtz association through the SBCancer programme.
The research leading to these results has received funding from the European
Union Seventh Framework Programme FP7/2007-2013 under grant agreement
n\textsuperscript{o}~FP7-ICT-2009-4 248940.
\end{acknowledgements}

\bibliographystyle{spmpsci}

\end{document}